\theoremstyle{plain}
 \newtheorem{thm}{Theorem}[section]
 \newtheorem{cor}[thm]{Corollary}
\newtheorem{lem}[thm]{Lemma}
\newtheorem{prop}[thm]{Proposition}
\theoremstyle{definition}
\theoremstyle{plain}
\newtheorem{theorem}[thm]{Theorem}
\newtheorem{lemma}[thm]{Lemma}
\theoremstyle{definition}
\newtheorem{defin}[thm]{Definition}
\newtheorem{example}[thm]{Example}
\newtheorem{remark}[thm]{Remark}
\numberwithin{equation}{section}
\newcommand{\sA}{{\mathcal A}}
\newcommand{\sH}{{\mathcal H}}
\newcommand{\sK}{{\mathcal K}}
\newcommand{\sP}{{\mathcal P}}
\newcommand{\B}{{\mathbb B}}
\newcommand{\E}{{\mathbb E}}
\newcommand{\N}{{\mathbb N}}
\newcommand{\R}{{\mathbb R}}
\newcommand{\hol}{\ensuremath{\mathcal{O}}}
\newcommand\la{\lambda}
\newcommand\e{\epsilon}
\newcommand\al{\alpha}
\newcommand\Ga{\Gamma}
\newcommand\ga{\gamma}
\newcommand\de{\delta}
\DeclareMathOperator{\Pic}{Pic}
\newcommand{\HH}{\ensuremath{\mathbb{H}}}
\newcommand{\FF}{\ensuremath{\mathbb{F}}}
\newcommand{\ra}{\ensuremath{\rightarrow}}
\newcommand{\CC}{\mathbb{C}}
\newcommand{\PP}{\mathbb{P}}
\newcommand{\QQ}{\mathbb{Q}}
\newcommand{\RR}{\mathbb{R}}
\newcommand{\ZZ}{\mathbb{Z}}
\newcommand{\Num}{\mathrm{Num}}
\begin{document}
\title[
Mori dream spaces and $\QQ$-homology quadrics]{
 Mori dream spaces and  $\QQ$-homology quadrics}
 \author{Paolo Cascini}
\author{Fabrizio Catanese}
\author{Yifan Chen}
\author{JongHae Keum}
\date{\today}

\address{Paolo Cascini,
Department of Mathematics\\
Imperial College London\\
180 Queen's Gate\\
London SW7 2AZ, UK}
\email{p.cascini@imperial.ac.uk}

\address{Fabrizio Catanese,
 Mathematisches Institut der Universit\"{a}t
Bayreuth, NW II\\ Universit\"{a}tsstr. 30,
95447 Bayreuth, Germany.}
\address{
Fabrizio Catanese,  Korea Institute for Advanced Study, Hoegiro 87, Seoul,
133--722.}
\email{Fabrizio.Catanese@uni-bayreuth.de}

\address{Yifan Chen, School of Mathematical Sciences, Beihang University, 9 Nanshan Street,
Shahe Higher Education Park, Changping, Beijing, 102206, P. R. China}
\email{chenyifan1984@buaa.edu.cn}

\address{JongHae Keum, HCMC, Korea Institute for Advanced Study, Seoul 02455, Republic of Korea} \email{jhkeum@kias.re.kr}


\maketitle

\begin{abstract}
We show that Shavel type surfaces are fake quadrics of even type which are not Mori dream surfaces,
yet there are infinitely many primes $p$ such that the reduction modulo $p$ is a Mori dream surface.

We investigate fake quadrics, first concerning  the property of being Mori dream surfaces,
then we try to determine which surfaces isogenous to a product are fake quadrics of even type.

\end{abstract}

\section{Introduction}

In the paper \cite{moridream} the authors considered  complex surfaces of general type
with $q=p_g=0$ which are Mori dream surfaces, and asked in section 3.2 whether there are fake quadrics which
are not Mori dream surfaces.

Recall that, for complex surfaces isogenous to a product of curves, it was established  in  \cite{moridream} and \cite{f-l}
 that they are Mori dream surfaces.

At the Hefei Conference in September 2024, the second author pointed out that such examples
should be provided  by the surfaces constructed by Shavel in 1978 \cite{shavel}.

The first aim of this short note is therefore to give a complete proof of this assertion.

The second aim is to discuss several  problems related to  minimal surfaces  of general type
which are $\QQ$-homology quadrics: that is,  surfaces with  $q=p_g=0$, and with  second Betti number $b_2(S)=2$
(equivalently, with $K^2_S=8$).

 Up to now, all known such examples have universal cover equal to the bidisk $\HH \times\HH$.

 Indeed Hirzebruch (\cite{hirz} Problem 25,  see also pages 779-780 of \cite{ges-werke}) was the first to ask the question whether there exists a surface of general type which is homeomorphic to $\PP^1 \times \PP^1$,  and one can respectively ask the same question for a surface
  homeomorphic to the blow up $\FF_1$ of $\PP^2$ in one point. The answer is
 suspected to be negative.

 The above two manifolds are topologically distinguished by the property that the intersection form on the Severi group $Num(S)$ is even in the first case,
 odd in the second.

We show here that many of our surfaces (surfaces  of general type
which are $\QQ$-homology quadrics) have even intersection form.    The existence of the  case
of odd intersection form  is still unknown.

 Moreover, the third question which we consider is: what happens when, instead of complex surfaces,
 we consider surfaces defined over an algebraically closed field of positive characteristic? When
  are they Mori dream surfaces?

   This is related to a beautiful conjecture by Ekedahl, Shepherd -Barron and Taylor \cite{EST}  about
   algebraic  integrability of
   foliations via reduction modulo primes $p$.

   We can summarize  our result in this regard as follows

   \begin{theorem}\label{shavel}
   A Shavel type surface $S$  is an even  fake quadric which is not a Mori dream surface.

   There are infinitely many primes $p$ such that the reduction of $S$ modulo $p$ is a Mori dream surface.

   \end{theorem}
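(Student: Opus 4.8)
The plan is to realise a Shavel surface as a compact quotient $S=(\HH\times\HH)/\Gamma$ by a torsion-free irreducible cocompact lattice $\Gamma$ arising from the unit group of a maximal order in a quaternion division algebra over a real quadratic field $F$, and to read off everything from this uniformisation. First I would recall from Shavel's construction that such an $S$ is a minimal surface of general type with $q=p_g=0$ and $K_S^2=8$, hence $b_2(S)=2$: this is exactly a fake quadric. Because $p_g=0$, the Hodge structure on $H^2(S,\ZZ)$ is purely of type $(1,1)$, so the Lefschetz $(1,1)$-theorem gives $\NS(S)=H^2(S,\ZZ)$ and therefore $\Num(S)=H^2(S,\ZZ)/\mathrm{tors}$ is a unimodular lattice of signature $(1,1)$ by the Hodge index theorem. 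There are only two such lattices, the even hyperbolic plane $U$ and the odd lattice $\langle 1\rangle\oplus\langle-1\rangle$, and by Wu's formula the even one occurs iff $K_S$ is divisible by $2$ in $\Num(S)$. To establish the even type I would use the splitting $p^*\Omega^1_S=\pi_1^*\Omega^1_\HH\oplus\pi_2^*\Omega^1_\HH$, which descends to $L_1\oplus L_2$ with $K_S=L_1+L_2$, and check, using the quaternionic structure (the existence of the weight-one automorphic line bundles), that $K_S$ is $2$-divisible; then $\Num(S)\cong U=\langle e,f\rangle$ with $e=\tfrac12 L_1$, $f=\tfrac12 L_2$, $e^2=f^2=0$, $e\cdot f=1$.

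Next I would pin down the cones. The tautological bundles $L_1,L_2$ carry the semi-positive $(1,1)$-forms pulled back from the invariant metric on each factor, so $e$ and $f$ are nef; consequently every irreducible curve $C$ satisfies $C\cdot e\ge 0$ and $C\cdot f\ge 0$, forcing $C^2\ge0$. Hence over $\CC$ the pseudo-effective and nef cones both coincide with the closed positive cone $\RR_{\ge0}e+\RR_{\ge0}f$, whose two extremal rays are the rational isotropic classes $e,f$. Now $e$ is nef with $e^2=0$ but is \emph{not} semi-ample: a semi-ample isotropic class would make $|me|$ a fibration $S\to\PP^1$ (the base is $\PP^1$ since $q=0$), i.e. it would algebraically integrate the first factor foliation, contradicting the irreducibility of $\Gamma$. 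Since a Mori dream surface has every nef divisor semi-ample, this shows $S$ is not a Mori dream surface.

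For the positive-characteristic statement I would spread $S$ out to a smooth family over an open part of $\mathrm{Spec}\,\mathcal{O}_F$, so that for all but finitely many $p$ the reduction $S_p$ is again a smooth fake quadric with the \emph{same} lattice $\Num(S_p)\cong U$ and the same nef isotropic classes $e,f$; in particular $\mathrm{Eff}(S_p)=\RR_{\ge0}e+\RR_{\ge0}f$ is already rational polyhedral. The only thing that can change is semi-ampleness, and this is governed by whether the factor foliations $\mathcal F_1,\mathcal F_2$ become algebraically integrable after reduction. Invoking the Ekedahl--Shepherd-Barron--Taylor circle of ideas on the algebraic integrability of foliations via reduction modulo $p$, together with a Chebotarev-type density input attached to the quaternion algebra over $F$, I would argue that for infinitely many $p$ both $\mathcal F_{1,p}$ and $\mathcal F_{2,p}$ acquire algebraic leaves, producing two fibrations $S_p\to\PP^1$. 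Then $e$ and $f$ are both semi-ample, the effective cone is rational polyhedral with semi-ample extremal rays, and hence the Cox ring of $S_p$ is finitely generated: $S_p$ is a Mori dream surface.

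The main obstacle is exactly this last step: turning the ESBT heuristic into a proof that the transcendental factor foliations become algebraically integrable for \emph{infinitely many} primes, and doing so simultaneously for both factors, so that both extremal rays become semi-ample at a common infinite set of $p$. I expect the heart of the matter to be the computation of the $p$-curvature of the tautological connection associated to each factor, and the identification, via the splitting behaviour of $p$ in $F$ and in the quaternion algebra, of an infinite set of primes at which it vanishes; the secondary point to verify is the (standard) implication that a surface of Picard rank two carrying two semi-ample isotropic fibre classes has finitely generated Cox ring. The evenness in the first step, which rests on the $2$-divisibility of $K_S$ coming from the weight-one automorphic bundles, is the other place where the specific quaternionic structure must be used rather than soft arguments.
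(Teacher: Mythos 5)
On the complex half your route differs from the paper's and has one unproved step, but it is repairable. The paper (Theorem \ref{thm:shavel}) proves the stronger statement $H^0(S,nL_1)=0$ for all $n\ge 1$: a section is an automorphic function for the weight-$n$ cocycle of the first factor, irreducibility of $\Gamma$ makes $p_1(\Gamma)$ dense in $\mathrm{SL}(2,\RR)$, and evaluating the functional equation on rotations kills every Taylor coefficient; non-semiampleness then gives non-MDS via Theorem \ref{thm:moridream}. You instead assume $e$ is semiample and assert that the resulting fibration with fibre class proportional to $e$ ``algebraically integrates the first factor foliation''. That is not automatic: the fibres need not be leaves. A degree comparison of $T_F$ with the restrictions of the two summands of $T_S$ shows that the general fibre is either a leaf of one of the two foliations or everywhere transverse to the first one; in the transverse case you still need, say, an Ehresmann-plus-finite-holonomy argument (the fibre has genus $\ge 2$, so its automorphism group is finite) to produce algebraic leaves before you can contradict their density. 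Your evenness argument is essentially Proposition \ref{prop:shaveleven}.

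The characteristic-$p$ half is where the proposal genuinely fails. You claim that for almost all $p$ the classes $e,f$ stay nef, that $\mathrm{Eff}(S_p)$ is still the quadrant they span, and that the Mori dream property will come from two fibrations appearing mod $p$. This contradicts the very result of Ekedahl--Shepherd-Barron--Taylor that you invoke: for every prime $p$ inert in the centre of the quaternion algebra, the classes $2pL_1-2L_2$ and $-2L_1+2pL_2$ (the zero divisors of the $p$-curvatures of the two foliations) are effective. Hence for those primes the effective cone strictly enlarges, it contains negative rational curves of classes $aL_1-L_2$ and $-L_1+bL_2$ (Lemma \ref{negative}), the classes $L_1,L_2$ are no longer nef (e.g.\ $L_1\cdot(aL_1-L_2)=-1<0$), and no fibration with fibre class proportional to $L_1$ or $L_2$ can exist, since its fibres would meet these curves negatively. (Moreover a smooth compact leaf of the first foliation has normal bundle equal to the restriction of the dual of $\sL_2$, hence class proportional to $L_2$ or of the form $aL_1-L_2$; so integrating a foliation could never make the isotropic class you want semiample anyway.) The paper's mechanism, Proposition \ref{mds}, is the opposite of yours: the two negative rational curves span $\mathrm{Eff}(S_p)$, the nef cone is then generated by two nef and big divisors $D_1,D_2$ whose exceptional loci are exactly those rational curves, and Keel's base-point-freeness theorem---a purely positive-characteristic statement, false over $\CC$---makes $D_1,D_2$ semiample, whence $S_p$ is a Mori dream surface; infinitude of inert primes gives infinitely many such $p$. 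Keel's theorem, together with the correct reading of EST as producing negative curves rather than fibrations, is the missing idea; since you yourself flag this step as an unresolved ``obstacle'', the second half of the theorem is not proved by the proposal.
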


  We also give results stating when a fake quadric over $\QQ$ is a Mori dream space.

   {\bf Acknowledgement.} Our collaboration started at the Hefei USTC Conference and continued at the Shanghai Fudan
   Workshop on birational geometry, held in September 2024; we are very grateful to Meng Chen and Lei Zhang for organizing these events.

   The question about the  existence of fake quadrics of odd type was raised by Jianqiang Yang in email correspondence with
   the second author (in December 2022).


 The first author is partially supported by a Simons Collaboration grant.
The last author is supported by National Research
Foundation of Korea (NRF 2022R1A2B5B03002625).

\section{Definitions and basic properties}

In these first sections we shall mostly work with projective smooth surfaces defined over the field $\CC$,
most definitions however make also sense if $\CC$ is replaced by an algebraically closed field $\sK$ of arbitrary characteristic.

\begin{defin}[ $\QQ$-homology Quadric]\label{defn:rationalquadric}
Let $S$ be a smooth projective surface over $\CC$.

The surface $S$ is called a   {\bf $\QQ$-homology quadric} if
$q(S)=p_g(S)=0$, $b_2(S)=2$.

In turn, it will be called an {\bf even homology  quadric} if
\begin{enumerate}
    \item $q(S)=p_g(S)=0$, $b_2(S)=2$;
    \item the intersection form on $\mathrm{Num}(S)$ is even, that is,
    \begin{align*}
        \left(\begin{array}{cc}
               0 & 1\\
               1 & 0
              \end{array}\right).
    \end{align*}
\end{enumerate}

We shall call it an {\bf odd homology quadric } if instead the intersection form is odd (hence diagonalizable with diagonal entries $ (+1, -1)$).
\end{defin}
For a $\QQ$-homology quadric $S$, condition (1) and the Noether formula imply
$$\chi(\hol_S)=1, c_2(S)=4, K_S^2=8.$$
Also the long exact sequence of cohomology groups associated to the exponential exact sequence shows that
 $$c_1 \colon \mathrm{Pic}(S) \rightarrow H^2(S,\ZZ)$$
 is an isomorphism. We identify these two groups and denote by
 $\mathrm{Tors}(S)$ their torsion subgroup. Then
 $$\mathrm{Num}(S)\cong \mathrm{Pic}(S)/\mathrm{Tors}(S).$$

 In the case of an even homology   quadric, condition (2) implies that $S$ is minimal. Hence, by surface classification, for an even homology quadric
\begin{itemize}
    \item either $S$ is rational, and then $S\cong \mathbb{F}_{2n}$ for $n\ge 0$;
    \item or $S$ is a minimal surface of general type.
\end{itemize}
In the former case $S$ is simply connected, $\mathrm{Tors}(S)=0$ and $K_S$ is not ample.

 In the case of an odd homology quadric
\begin{itemize}
    \item either $S$ is rational, and then $S\cong \mathbb{F}_{2n+1}$ for $n\ge 0$;
    \item or $S$ is a, non necessarily minimal, surface of general type.
\end{itemize}

\begin{defin}[Even fake Quadric]\label{defn:fakequadric} Let $S$ be a smooth projective surface over $\CC$.
The surface $S$ will be  called \footnote{The reader should be made aware that, in the literature, sometimes $\QQ$-homology quadrics
of general type  are referred to as fake quadrics, without further specification whether the intersection form is even or odd, see \cite{ges-werke}, \cite{dzambic}, \cite{d-r}, \cite{f-l}.} an {\bf even fake quadric} if it is an even  homology quadric and is of general type.
\end{defin}

In general, a minimal smooth complex projective surface of general type with $p_g=q=0, K^2=8$ has Picard number $2$
and $K_S$ is ample by \cite[Proposition~2.1.1]{miyaoka}.
Also $\mathrm{Tors}(S)$ can  be nonzero (see \cite{bcf}).

For $L \in \mathrm{Pic}(S)$,
we denote by $[L]$ its class in $\mathrm{Num}(S)$.

\begin{defin}[Odd fake Quadric]\label{defn:fakeF1}Let $S$ be a smooth projective surface over $\CC$.
  If $S$  is an odd homology quadric which is  of general type,
then either $S$ is not minimal and it is a one point blow up of a fake projective plane, or
we call the  surface $S$ an {\bf odd fake quadric}, which means:
\begin{enumerate}
    \item $S$ is minimal of general type with $K_S^2=8, p_g(S)=0$;
    \item the intersection form on $\mathrm{Num}(S)$ is odd.
\end{enumerate}
\end{defin}

Therefore smooth minimal surfaces of general type with $K^2=8$ and $p_g=0$ are divided into two classes:
the even and the odd fake quadrics.

\begin{lem}\label{fibration}Let $S$ be a fake quadric.

Assume that $f \colon S \rightarrow \PP^1$ is a fibration and  denote by $F$ the general fibre of $f$. Then
\begin{enumerate}
    \item  any fibre $F_t = f^*(t)$ has irreducible support;
    \item the multiplicity $m_t$ of any multiple fibre $F_t = m_t F'_t$  divides $g-1$, where $ g$ is the genus of $F$; and
    \item  the class of the   fibre $[F]$  is divisible by $d$ in $Num(S)$, where $d$ is the  least common multiple of the exponents
    $m_i$ of the multiple fibres.
\end{enumerate}
\end{lem}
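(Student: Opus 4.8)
The plan is to exploit the strong numerical constraint coming from $\Num(S)$ having rank $2$ and signature $(1,1)$ (the hyperbolic plane in the even case, $\mathrm{diag}(1,-1)$ in the odd case). Throughout one uses that a fake quadric is minimal of general type with $K_S$ ample, so $F^2=0$ and, by adjunction, $K_S\cdot F=2g-2>0$, whence $g\ge 2$; we may also assume $f_*\mathcal{O}_S=\mathcal{O}_{\PP^1}$ (Stein factorization), so that the fibres are connected. Two elementary lattice facts drive everything. First, $[F]$ is a nonzero isotropic class, so in the nondegenerate rank-$2$ space $\Num(S)\otimes\QQ$ its orthogonal complement is the line $\QQ[F]$ itself. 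Second, writing $[F]=k\,v$ with $v\in\Num(S)$ primitive and $k\in\ZZ_{>0}$, the class $v$ is automatically isotropic, and it detects divisibility of $[F]$ exactly.

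For (1), write a fibre as $F_t=\sum_i a_iC_i$ with $C_i$ the distinct irreducible components and $a_i\ge 1$. Each $C_i$ lies in a fibre and is therefore disjoint from the general fibre, so $C_i\cdot F=0$ and hence $[C_i]\in\QQ[F]$, say $[C_i]=\lambda_i[F]$; testing against an ample $H$ gives $\lambda_i=(C_i\cdot H)/(F\cdot H)>0$. Then $C_i\cdot C_j=\lambda_i\lambda_j\,F^2=0$ for all $i,j$. Since the fibres are connected, the presence of two or more components would force some pair to meet, i.e. $C_i\cdot C_j>0$; this contradiction leaves a single component, so $F_t$ has irreducible support.

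For (2) and (3), let $F_{t_i}=m_iF'_{t_i}$ run over the multiple fibres; by (1) each $F'_{t_i}$ is irreducible, and numerical equivalence of fibres gives $[F]=m_i[F'_{t_i}]$, so $(F'_{t_i})^2=0$. Adjunction for $F'_{t_i}$ yields $2p_a(F'_{t_i})-2=K_S\cdot F'_{t_i}=\tfrac{1}{m_i}K_S\cdot F=\tfrac{2g-2}{m_i}$, hence $p_a(F'_{t_i})-1=\tfrac{g-1}{m_i}\in\ZZ$, which is (2). For (3), the integral class $[F'_{t_i}]=\tfrac{1}{m_i}[F]=\tfrac{k}{m_i}\,v$ lies in $\Num(S)$, so primitivity of $v$ forces $m_i\mid k$; therefore $d=\mathrm{lcm}(m_i)$ divides $k$ and $[F]=d\cdot(\tfrac{k}{d}\,v)$ is divisible by $d$ in $\Num(S)$.

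The only point needing care is the positivity input in (1): one must know that $[F]$ is nef (equivalently $\lambda_i>0$), which follows from $F$ being a fibre, and that the fibres are connected, which is why reducing to $f_*\mathcal{O}_S=\mathcal{O}_{\PP^1}$ at the outset is essential. Granting these, the vanishing $C_i\cdot C_j=0$ combined with connectedness is exactly what makes the rank-$2$ hypothesis bite; the arithmetic in (2) and (3) is then immediate from adjunction and the primitive decomposition $[F]=k\,v$.
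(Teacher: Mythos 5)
Your proof is correct, and it takes a genuinely different route from the paper's in two of the three parts. For (1) the paper invokes Zariski's lemma (the intersection form on the components of a fibre is negative semidefinite with nullity one) together with $\rho(S)=2$; you bypass Zariski's lemma entirely: every component $C_i$ of a fibre is disjoint from the general fibre, so $[C_i]\in[F]^{\perp}$, and since the form on $\mathrm{Num}(S)\otimes\QQ$ is nondegenerate of rank $2$ and $[F]$ is a nonzero isotropic class, $[F]^{\perp}=\QQ[F]$; then $C_i\cdot C_j=\lambda_i\lambda_j F^2=0$ for all $i,j$, contradicting connectedness if there were two or more components. This is a more elementary argument whose only extra input is connectedness of fibres, which you rightly secure at the outset (via Stein factorization and $q(S)=0$) --- the same hypothesis that is hidden in the paper's ``nullity $1$''. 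Part (2) is the same adjunction computation the paper leaves implicit, and your write-up usefully records the intermediate step $p_a(F'_{t_i})-1=(g-1)/m_i$. For (3) the paper uses the arithmetic identity $\gcd(d/m_1,\dots,d/m_s)=1$ to express $\tfrac1d[F]$ as an integral linear combination of the actual classes $[F'_i]$, whereas you decompose $[F]=k\,v$ with $v$ primitive and deduce $m_i\mid k$ for each $i$ from integrality of $[F'_i]=(k/m_i)v$ (saturation of $\ZZ v$ in the free group $\mathrm{Num}(S)$), hence $d=\mathrm{lcm}(m_i)\mid k$. Both are valid; the paper's gcd trick exhibits $\tfrac1d[F]$ concretely as a combination of fibre classes, while your primitivity argument isolates the cleaner lattice-theoretic content (the divisibility index of $[F]$ is a common multiple of all the $m_i$) and avoids the number-theoretic identity altogether.
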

\proof (1)  follows from Zariski's lemma (the intersection form on the fibre components is seminegative with nullity $1$), and from the fact that the Picard number  $\rho(S): = rank (Num(S))=2$.

(2) follows by adjunction.

(3)
If $d$ is the least common multiple of the multiplicities $m_1, \dots, m_s$
 of the  multiple fibres, then we may write $\frac{1}{m_i} = \frac{ r_i} {d} $, where the $r_i$'s  have GCD equal to $1$. Thus,  we can write
 $ \frac{1}{d} $ as a sum of the rational numbers $ \frac{1}{m_i} $, hence proving that  $\frac{1}{d} [F] \in Num(S)$, since it is an integer
 linear combination of the classes $F'_i$.

 Therefore,  the divisibility  of $[F]$  is a multiple of  $d$.
 \qed

\begin{lem}Keep the assumption of the previous lemma \ref{fibration}.

Assume that $S$ has another fibration $f'\colon S\to \mathbb P^1$ with general fibre $F'$. Then (we use the classical symbol $ \sim : = \sim_{num}$
for numerical equivalence)
\begin{enumerate}
    \item $FF'=(g-1)(g'-1)$;
    \item $K_S\sim \frac{2}{g-1}F+\frac{2}{g'-1}F'$; and
    \item the intersection form of $S$ is even if $\frac{1}{g-1}F, \frac{1}{g'-1}F' \in Num(S)$.
\end{enumerate}
\end{lem}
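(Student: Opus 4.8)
The plan is to reduce everything to linear algebra in the rank-two lattice $N:=\mathrm{Num}(S)$, exploiting that $N$ is unimodular of signature $(1,1)$. First I would record the basic numerical data. Since $F$ and $F'$ are fibre classes, $F^2=(F')^2=0$, and by the adjunction formula $K_S\cdot F=2g-2=2(g-1)$ and likewise $K_S\cdot F'=2(g'-1)$; because $S$ is a fake quadric, $K_S$ is ample (by Miyaoka's result quoted above), so $K_S\cdot F>0$ forces $g\ge 2$ and $g'\ge 2$, and the divisions below make sense. The key preliminary point is that $[F]$ and $[F']$ are numerically independent: as $f\ne f'$, the general fibre $F$ is not contracted by $f'$, so $F\cdot F'=\deg(f'|_F)>0$; since $N$ has rank $2$, the classes $[F],[F']$ then form a $\QQ$-basis of $N\otimes\QQ$.

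For (1) and (2) I would write $K_S\sim aF+bF'$ with $a,b\in\QQ$, which is legitimate by the previous paragraph. Intersecting with $F$ and with $F'$ gives
\[
b\,(FF')=K_S\cdot F=2(g-1),\qquad a\,(FF')=K_S\cdot F'=2(g'-1),
\]
while squaring gives $8=K_S^2=2ab\,(FF')$. Substituting $a=2(g'-1)/(FF')$ and $b=2(g-1)/(FF')$ into the last identity yields $4(g-1)(g'-1)/(FF')=4$, that is $FF'=(g-1)(g'-1)$, which is (1). Feeding this back into the expressions for $a$ and $b$ gives $a=\frac{2}{g-1}$ and $b=\frac{2}{g'-1}$, which is exactly (2).

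For (3), set $e:=\frac{1}{g-1}F$ and $e':=\frac{1}{g'-1}F'$, which by hypothesis lie in $N$. Using $F^2=(F')^2=0$ together with part (1), their Gram matrix is
\[
\begin{pmatrix} e^2 & e\cdot e' \\ e'\cdot e & (e')^2 \end{pmatrix}
=\begin{pmatrix} 0 & 1 \\ 1 & 0 \end{pmatrix}.
\]
It then remains to upgrade this to a statement about the whole of $N$, namely that $e,e'$ form a $\ZZ$-basis of $N$. They are independent, hence span a finite-index sublattice $\langle e,e'\rangle\subseteq N$, and the discriminant relation gives $-1=\det\langle e,e'\rangle=[N:\langle e,e'\rangle]^2\det N$. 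Since $N\cong H^2(S,\ZZ)/\mathrm{Tors}$ is unimodular (the class map $c_1$ is an isomorphism, and cup product is unimodular by Poincar\'e duality), $\det N=\pm1$, which forces the index to equal $1$. Thus $e,e'$ is a basis and the intersection form of $S$ is the even form displayed above.

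The computational steps in (1) and (2) are routine once independence of $[F],[F']$ is in hand. The only genuinely delicate point is the final one: that $e,e'$ generate $N$ over $\ZZ$ rather than merely a proper finite-index sublattice. This is exactly where the unimodularity of $\mathrm{Num}(S)$ is essential, and it is the step I would take most care over.
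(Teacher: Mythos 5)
Your proof is correct and takes essentially the same route as the paper: both arguments are pure linear algebra in the rank-two unimodular lattice $\mathrm{Num}(S)$, the paper phrasing (1)--(2) as the vanishing of the $3\times 3$ Gram determinant of $K_S,F,F'$ (forced by $\rho(S)=2$) where you instead solve for the coefficients of $K_S$ in the basis $[F],[F']$ and substitute into $K_S^2=8$, while part (3) is proved identically via unimodularity of $\mathrm{Num}(S)$. The one point where you are more careful is the explicit verification that $F\cdot F'>0$, which is needed to discard the root $F\cdot F'=0$ of the paper's determinant equation and is left implicit there.
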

\proof The matrix of intersection numbers of $K_S, F, F'$ has determinant
$$8(g-1)(g'-1)x-8x^2,~~x=F   F',$$
which must be $0$ since $\rho(S)=2$, hence (1) and (2) follow right away.

For (3), these classes generate a unimodular lattice, hence all of $Num(S)$.
\qed

\begin{lem}\label{evenodd}Let $S$ be a fake quadric. Then
 \footnote{ by Wu's formula, saying that $K_S$ induces the second Stiefel Whitney class $w_2(S) \in H^2 (S, \ZZ/2)$,  and by the universal coefficients formula, $S$ is spin (i.e., $w_2(S) =0$) if and only if $K_S$ is divisible by $2$ in $H^2 (S, \ZZ)$, a fact which is often expressed by saying that $S$ is {\bf even}; being an even surface is  a stronger notion than requiring that the intersection form is even, as one can see from  the example  of  Enriques surfaces.}  the intersection form is even if and only if $K_S$ is divisible by 2 in  $\mathrm{Num}(S)$.
\end{lem}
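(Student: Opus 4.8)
The plan is to exploit the fact that $K_S$ is a \emph{characteristic element} of the intersection lattice and then read off the two cases from the explicit unimodular forms of Definition~\ref{defn:rationalquadric}. First I record the ambient structure: since $q(S)=p_g(S)=0$, the cycle map $c_1\colon \mathrm{Pic}(S)\to H^2(S,\ZZ)$ is an isomorphism, so by Poincar\'e duality the intersection pairing on $\mathrm{Num}(S)=H^2(S,\ZZ)/\mathrm{Tors}(S)$ is unimodular of rank $b_2(S)=2$ and signature $(1,1)$, as already noted in the excerpt. The key input is that $K_S$ is characteristic, i.e. $D\cdot K_S\equiv D^2 \pmod 2$ for every $D$. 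This is immediate from Riemann--Roch: for $D\in\mathrm{Pic}(S)$ one has
$$\chi(\hol_S(D))-\chi(\hol_S)=\tfrac12\big(D^2-D\cdot K_S\big)\in\ZZ,$$
so $D^2-D\cdot K_S$ is even; and since the congruence depends only on numerical classes, the statement descends to $\mathrm{Num}(S)$.

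Given this, the equivalence follows from a short lattice argument. In any unimodular lattice $L$ the assignment $x\mapsto x^2\bmod 2$ is a linear functional on $L/2L$ (because $(x+y)^2\equiv x^2+y^2\pmod 2$), and unimodularity makes the pairing identify $L/2L$ with its own dual; hence the characteristic vectors form exactly one coset $c_0+2L$. By definition the form is even precisely when this functional vanishes, that is, when $0$ is characteristic, i.e. when $c_0\in 2L$. Since $K_S\in c_0+2L$, we conclude that the form is even if and only if $c_0\in 2L$ if and only if $K_S\in 2\,\mathrm{Num}(S)$, which is the assertion.

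Alternatively, because $\mathrm{Num}(S)$ has rank $2$, one can argue by direct inspection of the two forms permitted by the Definitions. Writing $K_S=ae+bf$ in the standard basis and imposing $K_S\cdot e\equiv e^2$ and $K_S\cdot f\equiv f^2\pmod 2$: in the even case $U=\left(\begin{smallmatrix}0&1\\1&0\end{smallmatrix}\right)$ this forces both $a$ and $b$ even, so $K_S\in 2\,\mathrm{Num}(S)$; in the odd case $\langle 1\rangle\oplus\langle -1\rangle$ it forces both $a$ and $b$ odd, so $K_S\notin 2\,\mathrm{Num}(S)$. The only genuine content is the characteristic-vector property of $K_S$ together with unimodularity; the rest is bookkeeping. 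The point to watch is that divisibility is asserted in $\mathrm{Num}(S)$, so torsion is irrelevant here --- consistent with the footnote's caveat that being an \emph{even surface} (divisibility of $K_S$ in all of $H^2(S,\ZZ)$) is a strictly stronger condition.
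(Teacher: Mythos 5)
Your proof is correct and is essentially the paper's own argument: the paper's one-line proof invokes exactly the two facts you develop, namely that $K_S$ is characteristic ($K_S\cdot D\equiv D^2 \pmod 2$) and that the intersection form on $\mathrm{Num}(S)=H^2(S,\ZZ)/\mathrm{Tors}(S)$ is unimodular. You have merely spelled out the lattice-theoretic bookkeeping (the coset of characteristic vectors and the identification of $L/2L$ with its dual) that the paper leaves implicit.
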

\proof $K_S   D \equiv D^2 (mod \  2)$, and the intersection form on $$Num(S) = H^2(S, \ZZ) / Tors(S)$$ is unimodular.
\qed

\section{Surfaces isogenous to a product}

\begin{defin}Let $S$ be a smooth projective surface.
The surface $S$ is said to be {\bf isogenous to a  higher product}  \cite{isogenous}  if
$$S\cong (C_1 \times C_2)/G,$$
where $C_i$ is a smooth curve with $g(C_i)\ge 2$ and $G$ is a finite group acting faithfully and freely on
$C_1\times C_2$.

 If there are respective actions of $G$ on $C_1$ and $C_2$ such that  $G$ acts
 by the diagonal action $ g(x,y) = (gx, gy)$  on $C_1\times C_2$, then $S$ is called of {\bf unmixed type}.

If some element of $G$ exchanges two factors, then $S$ is called of {\bf mixed type}.
\end{defin}
\noindent~\textbf{Convention:}~In this article, we shall only consider  surfaces isogenous to a product with $p_g(S)=0$.

\bigskip

 The next question which we ask is to determine which of the $\QQ$-homology quadrics which are isogenous to a
higher product are even or odd fake quadrics.

\begin{example}
The classical Beauville surface is an even  fake quadric.

\end{example}
\begin{proof}
Here $C_1= C_2$ are the Fermat quintics in $\PP^2$, and the group $G = \mu_5 \times \mu_5$
acts by the linear action
$$ (\zeta_1, \zeta_2) (x_0, x_1, x_2) =  (x_0, \zeta_1 x_1, \zeta_2 x_2) .$$
Hence $\hol_C(1) $ is a $G$-linearized bundle with square $K_C$,
see also (\cite{cat}).

Thus,  $\hol_{C_1}(1) \otimes \hol_{C_2} (1) $ is a $G$-linearized bundle which
descends to a line bundle $L$ with square  $K_S$.

Alternatively, the fibres of both projections are divisibile by 5 and yield $F'_1, F'_2$
such that $F'_1   F'_2 =1$ and $(F'_j)^2=0$.
\end{proof}

We can give a partial  answer to the question for surfaces isogenous to a product of unmixed type with $p_g=0$,
which have been classified
in \cite{product} (their torsion groups have been classified in \cite{bcf}).

\begin{theorem}
Let $S = (C_1\times C_2)/G$ be a surface isogenous to a product of unmixed type,
with $p_g(S) = 0$; then $G$ is one of the groups in the Table \ref{tab1} and the mutiplicities of the multiple fibres for
 the two natural fibrations $ S \ra C_i/G$ are listed in the table.
 For each case in the list we have an irreducible component of the moduli space, whose dimension is denoted by
 $D$. The property of being an even, respectively an odd  homology quadric
   and the first homology group of $S$ are given in the third last  column, respectively in the last column.

\begin{table}[!h]
\begin{tabular}{c|c|c|c|c|c|c}
$G$ & $Id(G)$ & $T_1$ & $T_2$ & $parity$&$D$&  $H_1(S,\mathbb Z)$\\
\hline
$\mathcal{A}_5$ & $\langle 60,5\rangle$& $[2,5,5]$& $[3,3,3,3]$&?&1&$(\mathbb Z_3)^2\times (\mathbb Z_{15})$\\
$\mathcal{A}_5$ & $\langle 60,5\rangle$& $[5,5,5]$& $[2,2,2,3]$  &?&1&$(\mathbb Z_{10})^2$\\
$\mathcal{A}_5$ & $\langle 60,5\rangle$& $[3,3,5]$& $[2,2,2,2,2]$  &? &2&$(\mathbb Z_2)^3\times \mathbb Z_6$\\
$\mathcal S_ 4 \times \mathbb Z_2$& $\langle 48,48 \rangle$& $[2,4,6]$& $[2,2,2,2,2,2]$  &?&3&
$(\mathbb Z_2)^4\times \mathbb Z_4$\\
G(32) & $\langle 32,27 \rangle$& $[2,2,4,4]$& $[2,2,2,4]$ &?&2&$(\mathbb Z_2)^2\times \mathbb Z_4\times \mathbb Z_8$\\
$(\mathbb Z_5)^2$ & $\langle 25,2\rangle$& $[5,5,5]$& $[5,5,5]$ &even&0&$(\mathbb Z_5)^3$\\
$\mathcal{S}_4$ & $\langle 24,12\rangle$& $[3,4,4]$& $[2,2,2,2,2,2]$ &even&3&$(\mathbb Z_2)^4\times \mathbb Z_8$\\
G(16) & $\langle 16,3\rangle$& $[2,2,4,4]$& $[2,2,4,4]$&even&2&$(\mathbb Z_2)^2\times \mathbb Z_4\times \mathbb Z_8$\\
$D_4\times \mathbb Z_2$ & $\langle 16,11\rangle$& $[2,2,2,4]$& $[2,2,2,2,2,2]$ &?&4&$(\mathbb Z_2)^3\times (\mathbb Z_4)^2$\\
$(\mathbb Z_2)^4 $ & $\langle 16,14\rangle$& $[2,2,2,2,2]$&$[2,2,2,2,2]$  &even &4&$(\mathbb Z_4)^4$\\
$(\mathbb Z_3)^2$ & $\langle 9,2\rangle$& $[3,3,3,3]$& $[3,3,3,3]$&even&2&$(\mathbb Z_3)^5$\\
$(\mathbb Z_2)^3$ & $\langle 8,5\rangle$& $[2,2,2,2,2]$& $[2,2,2,2,2,2]$  &?&5&$(\mathbb Z_2)^4\times (\mathbb Z_4)^2$\\
\end{tabular}	
\caption{ }\label{tab1}
\end{table}
\end{theorem}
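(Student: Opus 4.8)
The plan is to build the table one row at a time, treating the classification of the triples $(G, T_1, T_2)$ as input and then reading off the four remaining columns (the two multiplicity lists, the parity, the dimension $D$, and $H_1(S,\ZZ)$) from the geometry of the two natural fibrations. First I would recall that for $S=(C_1\times C_2)/G$ of unmixed type with $p_g(S)=q(S)=0$ the relation $q(S)=g(C_1/G)+g(C_2/G)$ forces both quotients to be $\PP^1$, so each $C_i\to\PP^1$ is a Galois cover with group $G$ and branch datum $T_i=[m_{i,1},\dots,m_{i,r_i}]$. Riemann--Hurwitz gives
$$ g_i-1=\frac{|G|}{2}\Bigl(-2+\sum_{j}\bigl(1-\tfrac{1}{m_{i,j}}\bigr)\Bigr), $$
and the fake--quadric condition $\chi(\mathcal{O}_S)=(g_1-1)(g_2-1)/|G|=1$ cuts the admissible data down to the finite list established in \cite{product}; thus the columns $G$, $\mathrm{Id}(G)$, $T_1$, $T_2$ are exactly that list, while the last column $H_1(S,\ZZ)=\pi_1(S)^{\mathrm{ab}}$, computed from the extension $1\to\pi_1(C_1)\times\pi_1(C_2)\to\pi_1(S)\to G\to 1$, is supplied by the torsion classification of \cite{bcf}. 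What remains to be produced is therefore the multiplicity data, the dimension $D$, and the parity.

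For the multiplicities I would note that $p_1\colon S\to C_1/G=\PP^1$ has general fibre isomorphic to $C_2$, and the fibre over the image of $x\in C_1$ is $C_2/\mathrm{Stab}_G(x)$; since $G$ acts freely on $C_1\times C_2$, every nontrivial stabilizer acts freely on $C_2$, so that quotient is smooth and the fibre is a multiple fibre of multiplicity $|\mathrm{Stab}_G(x)|$. As $x$ ranges over the ramification these multiplicities are precisely the entries of $T_1$, and symmetrically for $p_2$ and $T_2$; hence the columns $T_1,T_2$ double as the multiplicity lists. The dimension $D$ of the moduli component equals the dimension of the product of the two Hurwitz spaces parametrizing the covers $C_i\to\PP^1$; each of these is $r_i-3$ (the configuration of branch points on $\PP^1$ modulo $\mathrm{PGL}_2$), so $D=r_1+r_2-6$, which reproduces the tabulated values.

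The main work is the parity, and here I would argue as follows. By Lemma~\ref{evenodd} the form is even if and only if $K_S$ is $2$-divisible in $\mathrm{Num}(S)$, and by the (unlabelled) lemma immediately following Lemma~\ref{fibration} this holds once $\tfrac{1}{g_2-1}[F_1]$ and $\tfrac{1}{g_1-1}[F_2]$ lie in $\mathrm{Num}(S)$, where $F_1,F_2$ are the general fibres of $p_1,p_2$. Lemma~\ref{fibration}(3) shows $[F_1]$ is divisible by $d_1:=\mathrm{lcm}(T_1)$, while Lemma~\ref{fibration}(2) gives $d_1\mid g_2-1$; consequently $\tfrac{1}{g_2-1}[F_1]\in\mathrm{Num}(S)$ whenever $d_1=g_2-1$. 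This yields the clean sufficient criterion: the form is even if $\mathrm{lcm}(T_1)=g_2-1$ and $\mathrm{lcm}(T_2)=g_1-1$. Computing the genera row by row from Riemann--Hurwitz, I would verify that this criterion holds for $(\ZZ_5)^2$, $(\ZZ_3)^2$, $\mathcal{S}_4=\langle 24,12\rangle$ and $G(16)=\langle 16,3\rangle$ (e.g. for $\mathcal{S}_4$ one gets $g_1-1=2$, $g_2-1=12$, with $\mathrm{lcm}[3,4,4]=12$ and $\mathrm{lcm}[2,\dots,2]=2$), marking all four as even.

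Finally I would treat $(\ZZ_2)^4=\langle 16,14\rangle$ and isolate the genuine obstacle. Here $g_1-1=g_2-1=4$ but $\mathrm{lcm}(T_i)=2$, so the numerical criterion fails and the reduced half-fibres $F_i'=\tfrac12 F_i$ only satisfy $F_1'F_2'=4$, not $1$; to conclude even I must establish the stronger divisibility $\mathrm{div}\,[F_i]=4$. I would obtain this by exhibiting an actual square root of $K_S$ in $\mathrm{Pic}(S)$, as in the Beauville example: a $G$-linearized theta characteristic $M_i$ on each $C_i$ with $M_i^{\otimes 2}\cong K_{C_i}$, whose product $M_1\boxtimes M_2$ descends to a line bundle $L$ on $S$ with $L^{\otimes 2}\cong K_S$. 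The hard part is exactly this last step: passing from the easy lower bound $\mathrm{lcm}(T_i)\mid \mathrm{div}\,[F_i]$ to the exact value $g_j-1$ requires control of the full Picard and torsion group of $S$ (available via \cite{bcf}), not merely the numerical fibration data. In the rows marked ``$?$'' neither the divisibility criterion nor an evident $G$-linearized square root settles the parity, and I expect these cases to remain genuinely open with the present methods.
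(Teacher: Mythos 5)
Your treatment of every column except the parity, and of the parity for four of the five rows marked ``even'', is essentially the paper's: the classification, the multiplicities, $D$ and $H_1(S,\ZZ)$ are imported from \cite{product} and \cite{bcf} exactly as the paper does, and your sufficient criterion $\mathrm{lcm}(T_1)=g_2-1$, $\mathrm{lcm}(T_2)=g_1-1$ is equivalent to the paper's condition $d_1d_2=|G|$, because $|G|=F_1F_2=(g_1-1)(g_2-1)$ while $d_1\mid g_2-1$ and $d_2\mid g_1-1$. So the rows $(\ZZ_5)^2$, $\mathcal{S}_4$, $G(16)$, $(\ZZ_3)^2$ come out even just as in the paper, and the rows with $\Phi_1\Phi_2=2$ are left undecided in both treatments.

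The genuine gap is the row $(\ZZ_2)^4=\langle 16,14\rangle$, which the theorem asserts to be even and which your proposal does not prove. You correctly reduce it to showing that the half-fibre classes $\Phi_i=\frac{1}{2}[F_i]$ (with $\Phi_1\Phi_2=4$) are each $2$-divisible, but you then only sketch a strategy---a $G$-linearized theta characteristic on each $C_i$ whose box product descends to a square root of $K_S$ in $\Pic(S)$---and you explicitly label the decisive step ``the hard part'' without carrying it out. Moreover, that strategy aims at a strictly stronger statement than needed: $2$-divisibility of $K_S$ in $\Pic(S)$ rather than in $\Num(S)$. Since $\Tors(S)\cong H_1(S,\ZZ)\cong(\ZZ_4)^4$ has $4$-torsion, $[K_S]=2[L]$ in $\Num(S)$ only yields $K_S=2L+\tau$ with $\tau$ torsion, and such a $\tau$ need not be a square in $\Tors(S)$; the paper's own remark after the theorem makes precisely this point for the sibling row $(\ZZ_2)^3$, where no $G$-linearized theta characteristic exists on $C_1$ and yet the numerical parity remains undetermined. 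So your route could fail even though the conclusion is true. The paper instead closes this case by a short lattice-theoretic argument that you are missing: by the symmetry of the two branch data and the associated monodromies, $\Phi_1$ and $\Phi_2$ have the same divisibility index in $\Num(S)$; if both were primitive, then two primitive isotropic classes in a rank-$2$ unimodular lattice have intersection number in $\{0,\pm 1\}$ (even case) or $\{0,\pm 2\}$ (odd case), contradicting $\Phi_1\Phi_2=4$; hence both are $2$-divisible, the classes $\frac{1}{2}\Phi_1,\frac{1}{2}\Phi_2$ span an even unimodular sublattice which must equal $\Num(S)$, and the form is even. Some such argument (or a completed construction of the square root) is required before the ``even'' entry in that row can be claimed.
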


\begin{proof}

In view of the cited results in \cite{product} and \cite{bcf}, it suffices to prove the assertion about the intersection form
being even, respectively odd.

We  use now (3) of Lemma \ref{fibration} showing that $F_j = d_j \Phi_j$, where $d_j$ is the least common multiple of the multiplicities
of the fibration with general fibre $F_j$.

Since $F_1   F_2 = |G|$, we conclude that $$\Phi_1   \Phi_2 =\frac{ |G| }{d_1 d_2}.$$
If $d_1 d_2 = |G|$, then $\Phi_1   \Phi_2 =1$, and, since $\Phi_j^2 =0$, we have an even intersection form.

Inspecting the list, we see that $\Phi_1   \Phi_2 \in \{ 1, 2,4\}$.

If we have an even intersection form, then we have a basis $e_1, e_2$ of the lattice $Num(S)$ with $e_j^2=0$,
and $e_1   e_2 = 1$.

Hence wlog we may assume that $[\Phi_j ]= a_j e_j$, and therefore $\Phi_1   \Phi_2 =a_1 a_2$.

If $2 | a_1 a_2$, then, for some $j$,  $2 | a_j$, and $\Phi_j$ is further divisible by $2$.

We could conclude that the intersection form is odd, in case $\Phi_1   \Phi_2 = 2$, if we knew that each $\Phi_j$
is not divisible by $2$.

In fact, if  the intersection form is odd, then we have a basis $q_1, q_2$ of the lattice $Num(S)$ with $q_1^2=1$, $q_2^2=-1$,
and $q_1   q_2 = 0$.

Then $\Phi_1, \Phi_2$ must be multiples of $q_1+ q_2$, respectively $q_1- q_2$, and indeed $(q_1+ q_2)
  (q_1- q_2) = 2$.

In the only case (with group ($\ZZ/2)^4$) where $\Phi_1   \Phi_2 = 4$, the divisibility index of $\Phi_1$ equals the one of $\Phi_2$ by the
symmetry of the roles of the two curves $C_1, C_2,$ including the associated monodromies. Hence either $\Phi_1$ and $\Phi_2$ are both 2-divisible,
and the intersection form is even, or the intersection form is odd and  $\Phi_1$ and $\Phi_2$ are both indivisible.
But then $\{ \Phi_1, \Phi_2\} = \{ (q_1+ q_2) , (q_1- q_2) \}$ and $(q_1+ q_2)
  (q_1- q_2) = 2$ contradicts  $\Phi_1   \Phi_2 = 4$.
  
\end{proof}

\begin{remark}

In the case where  $\Phi_1   \Phi_2 = 2$, it is easy to see which of the two divisors may be 2-divisible.

In fact, using the ramification formula for $ S \ra (C_1/G) \times (C_2/G) = \PP^1  \times  \PP^1 $,
we see that, setting $m_1, \dots, m_s$ to be the multiplicities of the multiple fibres in the first fibration,
and $ n_1, \dots, n_{s'}$ those in the second, then
$$ K_S = (-2 + \sum_j (1 - \frac{1}{m_j} )) F_1 +  (-2 + \sum_i (1 - \frac{1}{n_i} ) ) F_2 $$
and, in $Num(S) / 2 Num(S)$, we have
$$ K_S  \equiv  \sum_j( d_1 - r_j) \Phi_1 + \sum_i( d_2 - r'_i) \Phi_2 \equiv \de_1 \Phi_1 + \de_2 \Phi_2, \ \de_1, \de_2 \in \{0,1\} $$

We see by direct inspection  that exactly one $\de_j$ equals  $1$, the other is $0$.

Then $[K_S] \in Num(S)$ is 2-divisible if and only if the $[\Phi_j]$ with $\de_j =1$ is 2-divisible.

For instance in the last case we have $\de_1=1$, $\de_2=0$, indeed $K_S \sim \Phi_1 + 2 \Phi_2$.

\end{remark}

\begin{remark}
\begin{enumerate}
\item In the last case we can prove that $K_S$ is not 2-divisible, since there is no $G$-linearized thetacharacteristic
on the curve $C_1$, a hyperelliptic curve of genus $g_1=3$. Indeed, the only $G$-fixed thetacheracteristics are the hyperelliptic
divisor $\sH$, which does not admit a $G$-linearization, and $ P_1 + P_2 + P_3 + P_4 -  \sH$,
where the $P_j$'s are Weierstrass points and their sum is a $G$-orbit (hence $ \hol_{C_1} (  P_1 + P_2 + P_3 + P_4)$
admits a $G$-linearization).

\item  On the other hand, showing that $[K_S]$ is not 2-divisible is harder, in view
of the existence of torsion divisors of order $4$.

\item Our observation (1) shows that, given a Fuchsian group $\Ga < \PP SL (2, \RR)$ which is not torsion free,
the embedding $\Ga \hookrightarrow  \PP SL (2, \RR)$ does not necessarily  lift to $  SL (2, \RR)$
(unlike the case where $\Ga$ is cocompact and torsion free).
\end{enumerate}
\end{remark}

\section{Even fake quadrics}

\noindent{\textbf{Assumption}:}
In this section, we let $S$ be an even  fake quadric.
Recall that, over the complex number field $\CC$,  $S$ contains no smooth rational curves \cite[Proposition~2.1.1]{miyaoka}
and in particular $K_S$ is ample.

\subsection{Nef cone of an even  fake quadric}
\begin{lem}\label{lem:numS}
\begin{enumerate}
\item There exist $L_1, L_2 \in \Pic(S)$ such that $L_1  L_2=1$, $K_SL_i=2,
 L_i^2=0$ for $i=1,2$.
 \item  For any $L_1, L_2$ as in (1),
$$\mathrm{Num}(S)=\ZZ[L_1]\oplus \ZZ[L_2].$$
\item
 $~ K_S \sim 2L_1+2L_2$.
\end{enumerate}
\end{lem}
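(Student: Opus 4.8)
The plan is to work entirely inside the unimodular lattice $\Num(S)$, which by the definition of an even fake quadric is the hyperbolic plane: it admits a basis $e_1,e_2$ with $e_1^2=e_2^2=0$ and $e_1 e_2=1$. Since the map $\Pic(S)\to \Num(S)=\Pic(S)/\Tors(S)$ is surjective, it suffices to pin down the correct numerical classes and then lift them to line bundles; because intersection numbers depend only on numerical classes, any lift automatically inherits the required intersection data, so torsion plays no role. Two inputs drive the argument: first, $S$ is of general type with $K_S^2=8$ and $K_S$ ample (the standing Assumption of the section, via \cite[Proposition~2.1.1]{miyaoka}); second, by Lemma \ref{evenodd} the evenness of the form is \emph{equivalent} to $[K_S]$ being divisible by $2$ in $\Num(S)$.

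For part (1), I would write $[K_S]=2[M]$ with $[M]\in\Num(S)$ and compute $[M]^2=\tfrac14 K_S^2=2$. Expanding $[M]=a e_1+b e_2$ gives $2ab=2$, hence $(a,b)=\pm(1,1)$, so $[M]=\pm(e_1+e_2)$ is primitive; after replacing $(e_1,e_2)$ by $(-e_1,-e_2)$ if necessary (an isometry of the hyperbolic plane) we may assume $[K_S]=2e_1+2e_2$, the sign being the one compatible with $K_S$ ample. Lifting $e_1,e_2$ to $L_1,L_2\in\Pic(S)$ then yields $L_i^2=0$, $L_1 L_2=1$ and $K_S L_i=(2e_1+2e_2)\cdot e_i=2$, as required.

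For part (2), given any pair $L_1,L_2$ as in (1), the classes $[L_1],[L_2]$ have Gram matrix of determinant $-1$, hence are independent and span a finite-index sublattice $\Lambda\subseteq\Num(S)$ with $\det\Lambda=[\Num(S):\Lambda]^2\det\Num(S)$; since both discriminants equal $-1$, the index is $1$, so $[L_1],[L_2]$ is a basis and $\Num(S)=\ZZ[L_1]\oplus\ZZ[L_2]$. Part (3) is then immediate: writing $[K_S]=\alpha[L_1]+\beta[L_2]$ and pairing with $L_1$ and $L_2$ forces $\beta=K_S L_1=2$ and $\alpha=K_S L_2=2$, whence $K_S\sim 2L_1+2L_2$. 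There is no deep obstacle here; the single point needing care is that the intersection value is exactly $K_S L_i=2$ rather than merely an even number, and this is forced by the evenness hypothesis, which pins $[K_S]$ down to twice the square-$2$ class $e_1+e_2$ --- precisely the place where Lemma \ref{evenodd} and $K_S^2=8$ must be used together.
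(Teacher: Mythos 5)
Your proof is correct. Note that the paper states Lemma \ref{lem:numS} \emph{without proof}, so there is no argument of the authors to compare against line by line; what you have written supplies exactly the details they treat as immediate, using only the paper's own ingredients: Definition \ref{defn:rationalquadric} (evenness means the form on $\Num(S)$ is the hyperbolic plane), Lemma \ref{evenodd} (evenness is equivalent to $2\mid [K_S]$ in $\Num(S)$), the value $K_S^2=8$, and the surjectivity of $\Pic(S)\to\Num(S)=\Pic(S)/\Tors(S)$. You also correctly isolate the one point of substance: the divisibility from Lemma \ref{evenodd} is what forces $[K_S]=\pm 2(e_1+e_2)$ and hence $K_S L_i=2$; the equation $K_S^2=8$ alone would permit, say, $[K_S]=e_1+4e_2$. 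Two minor remarks. First, the sign normalization is a pure relabeling of the hyperbolic basis, so the appeal to ampleness of $K_S$ is decorative rather than necessary. Second, in part (3) your conclusion is numerical equivalence, and that is the intended reading (the paper's convention is $\sim \,=\, \sim_{num}$): linear equivalence in $\Pic(S)$ cannot hold for arbitrary lifts $L_1,L_2$ because of torsion, which is precisely why the paper proves $2\mid K_S$ in $\Pic(S)$ separately for Shavel surfaces in Proposition \ref{prop:shaveleven}.
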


 The condition that the universal covering of $S$ is the bidisk can be formulated as follows: there exists a 2-torsion divisor $\eta$
such that $$H^0( S^2(\Omega^1_S)(-K_S + \eta)) \neq 0,$$ see for instance \cite{cf}, \cite{cds}; this condition is equivalent  to the splitting of the cotangent bundle on a suitable unramified double covering of $S$.

From now on, we fix $L_1, L_2 \in \Pic(S)$ as in \ref{lem:numS}~(1).

\begin{prop}\label{prop:nefdivisor}
\begin{enumerate}
\item Any effective divisor on $S$ is nef.
\item Any nef and big divisor on $S$ is ample.
\item $L_i$ is nef for $i=1, 2$.
\item For any strictly effective divisor $D$ on $S$, if $D^2=0$, then $D\sim aL_1$ or $D\sim a L_2$ for some $a \in \ZZ_{>0}$.
\end{enumerate}
\end{prop}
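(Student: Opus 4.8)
The plan is to reduce all four assertions to one structural fact---that $S$ carries no irreducible curve of negative self-intersection---after which the even lattice $\mathrm{Num}(S)=\ZZ[L_1]\oplus\ZZ[L_2]$ turns everything into sign-chasing. For an irreducible curve $C$ I write $[C]=a[L_1]+b[L_2]$ with $a,b\in\ZZ$, so that $C^2=2ab$ and, using $K_S\sim 2L_1+2L_2$ from Lemma \ref{lem:numS}, $K_S\cdot C=2(a+b)$. The adjunction formula then collapses to
\[
p_a(C)=1+\tfrac12\bigl(C^2+K_S\cdot C\bigr)=(1+a)(1+b).
\]

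First I would prove the key claim: every irreducible curve satisfies $a,b\ge 0$ (in particular $C^2\ge 0$). Since $K_S$ is ample by the standing assumption, $K_S\cdot C=2(a+b)>0$, so $a+b\ge 1$. If $C^2<0$ then $ab<0$, and by the symmetry exchanging $L_1$ and $L_2$ I may assume $a>0>b$; then $1+a\ge 2$, so $p_a(C)\ge 0$ forces $1+b\ge 0$, i.e. $b=-1$ and $p_a(C)=0$. An irreducible curve of arithmetic genus $0$ is a smooth rational curve, of which $S$ has none by \cite[Proposition~2.1.1]{miyaoka}; this contradiction gives $ab\ge 0$, and together with $a+b>0$ forces $a,b\ge 0$.

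Granting the claim, the four parts are immediate. For (3), $L_1\cdot C=b\ge 0$ and $L_2\cdot C=a\ge 0$ for every irreducible $C$, so $L_1,L_2$ are nef. For (1), an effective $D=\sum n_iC_i$ has class $\alpha[L_1]+\beta[L_2]$ with $\alpha,\beta\ge 0$ (a non-negative combination of curve classes in the closed first quadrant), so $D\cdot C=\alpha b+\beta a\ge 0$ for all irreducible $C$. For (4), a strictly effective $D$ with $D^2=2\alpha\beta=0$ has $\alpha=0$ or $\beta=0$; ampleness of $K_S$ rules out $[D]=0$, so $[D]=a[L_1]$ or $a[L_2]$ with $a\in\ZZ_{>0}$. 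For (2), a nef and big $D$ has $D^2>0$; if $D\cdot C=0$ for some irreducible $C$ then $C$ is not proportional to $D$ (otherwise $[C]=0$), so the Hodge index theorem forces $C^2<0$, contradicting the claim---hence $D\cdot C>0$ for all $C$ and Nakai--Moishezon gives ampleness.

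The entire weight of the argument sits on the key claim, whose only non-formal ingredient is the absence of smooth rational curves; everything downstream is the collapse of adjunction to $(1+a)(1+b)$ and the self-duality of the first quadrant for the hyperbolic form. The one place demanding care is the borderline $b=-1$ (where $p_a(C)=0$ for every $a>0$): pure adjunction only excludes $b\le -2$, and it is precisely the $b=-1$ case that must be ruled out by invoking Miyaoka's theorem rather than by numerics alone.
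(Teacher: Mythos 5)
Your proof is correct and follows essentially the same route as the paper's: both hinge on excluding irreducible curves of negative self-intersection via adjunction (forcing $b=-1$, hence a smooth rational curve, contradicting Miyaoka's result), then deduce (2) from the index theorem together with Nakai--Moishezon, and (1), (3), (4) by sign-chasing in $\mathrm{Num}(S)=\ZZ[L_1]\oplus\ZZ[L_2]$. The only difference is organizational: you isolate the key claim ($a,b\ge 0$ for every irreducible curve) up front, whereas the paper proves (1) first and then uses nefness of effective divisors plus ampleness of $K_S$ to obtain the same sign conditions in (3) and (4).
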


\begin{remark}For (4), we do not claim that such $D$ exists.
\end{remark}

\proof

For (1), it suffices to show for any irreducible curve $C$, we have $C^2\ge 0$.
Assume by contradiction that $C^2<0$. Assume $C\sim a L_1+b L_2$ for $a, b\in \ZZ$. Then
$$C^2=2ab,~~ K_S   C=2a+2b.$$
We may assume that $a >0$ and $b<0$. By the adjunction formula,
$$-2\le 2p_a(C)-2=C^2+K_SC=2(a+b+ab)=  2a (1+b) + 2 b.$$
Therefore $b=-1$, $C \cong \PP^1$, and this    contradicts the fact that $S$ contains no smooth rational curve.

For (2), let $D$ be a nef and big divisor on $S$. Then $D^2>0$. Let $C$ be any irreducible curve.
Since $C$ is nef, $DC\ge 0$. Also, since $C^2\ge 0$, $DC>0$ by the algebraic index theorem. Therefore
$D$ is ample.

For (3) and (4), let $D$ be an effective divisor. Assume that $D\sim aL_1+bL_2$. Then $D^2=2ab$ and $K_SD=2(a+b)$. Since $D$ is nef and $K_S$ is ample, $a\ge 0, b\ge 0, a+b>0$. Then
$L_1D=b\ge 0, L_2D=a\ge 0$.  And if $D^2=0$, then $a=0$ or $b=0$.
\qed

\begin{cor}\label{cor:cones} Let $L_1, L_2$ as in \ref{lem:numS}~(1).
In $\mathrm{Num}(S)_\RR$,
\begin{align*}
    \mathrm{Amp}(S)&=\{ a[L_1]+b[L_2]~|~ a, b \in \RR_+\},\\
    \mathrm{Nef}(S)&=\{ a[L_1]+b[L_2]~|~a, b\in \RR_{\ge 0}\},\\
    \mathrm{Nef}(S)&=\overline{\mathrm{Eff}(S)}.
\end{align*}
\end{cor}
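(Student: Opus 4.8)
The goal is to compute the ample, nef, and effective cones of an even fake quadric $S$ in terms of the basis $[L_1], [L_2]$ of $\mathrm{Num}(S)$, and to show that the nef cone coincides with the closure of the effective cone. The plan is to leverage Proposition~\ref{prop:nefdivisor} as the main engine, since it already encodes the essential geometry: every effective divisor is nef, every nef and big divisor is ample, and the $L_i$ are nef.

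First I would establish the ample cone. Writing a class as $a[L_1]+b[L_2]$, the intersection pairing gives $(a[L_1]+b[L_2])^2 = 2ab$ and $K_S\cdot(a[L_1]+b[L_2]) = 2(a+b)$ from Lemma~\ref{lem:numS}. By the Nakai--Moishezon criterion (or directly, since $\rho(S)=2$), a class is ample precisely when it has positive self-intersection and positive intersection with $K_S$ (which is itself ample), i.e.\ when $ab>0$ and $a+b>0$; together these force $a>0$ and $b>0$. This identifies $\mathrm{Amp}(S)$ with the open positive quadrant spanned by $[L_1],[L_2]$. Since the nef cone is the closure of the ample cone, one immediately gets $\mathrm{Nef}(S) = \{a[L_1]+b[L_2] \mid a,b\in\RR_{\ge 0}\}$; alternatively, the forward inclusion follows from parts (3) of Proposition~\ref{prop:nefdivisor} (the $[L_i]$ are nef, and the nef cone is convex and closed), while the reverse inclusion follows because any nef class pairs nonnegatively with the nef classes $[L_1],[L_2]$, forcing $a=[L_2]\cdot D\ge 0$ and $b=[L_1]\cdot D\ge 0$.

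For the final equality $\mathrm{Nef}(S) = \overline{\mathrm{Eff}(S)}$, I would argue both inclusions. The inclusion $\overline{\mathrm{Eff}(S)} \subseteq \mathrm{Nef}(S)$ is exactly part (1) of Proposition~\ref{prop:nefdivisor}: every effective divisor is nef, and since $\mathrm{Nef}(S)$ is closed, the closure of the effective cone lands inside it. For the reverse inclusion $\mathrm{Nef}(S) \subseteq \overline{\mathrm{Eff}(S)}$, it suffices to show the two extremal rays $\RR_{\ge 0}[L_1]$ and $\RR_{\ge 0}[L_2]$ are limits of effective classes, since the effective cone is closed under addition and the nef cone is their convex hull. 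The cleanest route is to observe that $[L_i]$ lies in the closure of $\mathrm{Amp}(S)\subseteq \mathrm{Eff}(S)$: indeed $[L_i] = \lim_{\varepsilon\to 0^+}([L_i]+\varepsilon[L_{3-i}])$, and each $[L_i]+\varepsilon[L_{3-i}]$ is ample, hence (a rational multiple is) effective. Thus both generators of $\mathrm{Nef}(S)$ lie in $\overline{\mathrm{Eff}(S)}$, giving the desired equality.

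The only delicate point is the reverse inclusion for the effective-cone statement, because a priori the rays $\RR_{\ge 0}[L_i]$ need not contain any genuine effective representative (this is precisely what the remark after Proposition~\ref{prop:nefdivisor} warns about for part (4)). This is why I would phrase the argument through the \emph{closure}: rather than exhibiting an effective divisor on each boundary ray, I approximate the boundary by interior ample classes, which are unquestionably effective (being ample, a suitable multiple is very ample and hence effective), and pass to the limit. This sidesteps the genuine obstacle of deciding whether the boundary rays themselves are effective, which is not needed for the closure statement and which the paper deliberately leaves open.
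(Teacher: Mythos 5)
Your conclusions and overall architecture are correct, and they match the paper's own (implicit) derivation: the paper states this corollary with no separate proof, intending it to follow directly from Proposition~\ref{prop:nefdivisor}, which is exactly how you proceed --- effective $\Rightarrow$ nef for the inclusion $\overline{\mathrm{Eff}(S)}\subseteq\mathrm{Nef}(S)$, nefness of the $L_i$ for the quadrant description, and approximation of the boundary rays by interior classes for $\mathrm{Nef}(S)\subseteq\overline{\mathrm{Eff}(S)}$. Your closure-based handling of the boundary rays is also precisely the right way to sidestep the caveat in the remark after Proposition~\ref{prop:nefdivisor}(4).

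The one step that is wrong \emph{as justified} is your description of the ample cone. ``Positive self-intersection and positive intersection with $K_S$'' is not the Nakai--Moishezon criterion, and it does not imply ampleness on a general surface: those two conditions only guarantee bigness, and on any surface carrying a negative curve $E$ one can exhibit big classes satisfying both with $D\cdot E<0$, hence not even nef. Nakai--Moishezon requires positivity against \emph{every} irreducible curve, and checking that on $S$ needs exactly the geometric input of Proposition~\ref{prop:nefdivisor} (no negative curves); citing the criterion alone skips this. The repair is immediate from the tools you yourself list but do not invoke at this point: if $a,b>0$ then $a[L_1]+b[L_2]$ is nef by Proposition~\ref{prop:nefdivisor}(3) and big (self-intersection $2ab>0$), hence ample by Proposition~\ref{prop:nefdivisor}(2); conversely, an ample class is nef, hence satisfies $a=D\cdot L_2\ge 0$ and $b=D\cdot L_1\ge 0$ by your own pairing argument, and $D^2=2ab>0$ then forces $a,b>0$. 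With that substitution --- which also re-justifies the ampleness of $[L_i]+\varepsilon[L_{3-i}]$ used in your final paragraph --- the proof is complete and coincides with the paper's.
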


\begin{lem}\label{lem:cotangent}Assume that the cotangent sheaf of $S$ splits as the direct sum of two invertible sheaves:
       $$\Omega_{S}^1=\hol_S(A_1)\oplus \hol_S(A_2).$$
       Then either $[A_1]=2[L_1], [A_2]=2[L_2]$ or $[A_1]=2[L_2], [A_2]=2[L_1]$.

        Moreover the universal covering of $S$ is the bidisk $\HH \times \HH$, and $S = \HH \times \HH/\Ga$,
       where $ \Ga < \PP SL (2, \RR) \times \PP SL (2, \RR) $.
\end{lem}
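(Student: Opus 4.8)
The plan is to split the statement into a purely numerical identification of $[A_1]$ and $[A_2]$, carried out by Chern-class bookkeeping in the rank-two lattice $\Num(S)$, followed by the geometric uniformization, which I would deduce from the bidisk criterion recalled just before the lemma.

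First I would record the two relations forced by the splitting. Since $\Omega^1_S = \hol_S(A_1)\oplus\hol_S(A_2)$, its total Chern class is $(1+A_1)(1+A_2)$, so
\[ [A_1] + [A_2] = [K_S] \quad\text{in }\Num(S), \qquad A_1 \cdot A_2 = c_2(\Omega^1_S) = 4. \]
Here $c_2(\Omega^1_S) = c_2(T_S) = c_2(S) = 4$, because dualizing a rank-two bundle does not change $c_2$ and $c_2(S)=4$ for a $\QQ$-homology quadric. Writing $[A_1] = a[L_1] + b[L_2]$ with $a,b\in\ZZ$ in the basis of Lemma \ref{lem:numS} and using $[K_S] = 2[L_1] + 2[L_2]$, the first relation gives $[A_2] = (2-a)[L_1] + (2-b)[L_2]$. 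Substituting into $A_1\cdot A_2 = 4$ and using $L_i^2 = 0$, $L_1 L_2 = 1$, I get $a + b - ab = 2$, that is
\[ (1-a)(1-b) = -1. \]
As $a,b$ are integers, the two factors are $\pm 1$, so $(a,b)\in\{(2,0),(0,2)\}$ and therefore $\{[A_1],[A_2]\} = \{2[L_1], 2[L_2]\}$, proving the first assertion.

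For the uniformization I would use the splitting to verify the hypothesis of the bidisk criterion. Taking the symmetric square, $S^2\Omega^1_S = \hol_S(2A_1)\oplus\hol_S(A_1+A_2)\oplus\hol_S(2A_2)$, and the middle summand is $\hol_S(K_S)$; its canonical generator yields a nowhere-vanishing section of $S^2\Omega^1_S(-K_S)$, so that $H^0(S^2(\Omega^1_S)(-K_S+\eta))\neq 0$ already with $\eta = 0$. By the cited results \cite{cf}, \cite{cds} the universal cover of $S$ is then $\HH\times\HH$ and $S = \HH\times\HH/\Ga$ with $\Ga < \Aut(\HH\times\HH)$. To land inside $\PP SL(2,\RR)\times\PP SL(2,\RR)$ rather than the larger group allowing a swap of factors, I would invoke the first part: the summands $\hol_S(2L_1)$ and $\hol_S(2L_2)$ are non-isomorphic since $2[L_1]\neq 2[L_2]$, so the two foliations are globally distinguished and $\Ga$ cannot interchange the factors.

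The numerical part is completely elementary once the two Chern-class identities are written down, so the step I expect to rely on most is the uniformization input, namely the equivalence (recalled before the lemma) between a nonzero section of $S^2\Omega^1_S(-K_S+\eta)$ and the bidisk structure; I would take this from \cite{cf}, \cite{cds}. The only subtlety I anticipate on the geometric side is the non-swapping of the two factors, which is settled by the distinctness of the numerical classes $2[L_1]$ and $2[L_2]$.
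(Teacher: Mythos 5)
Your numerical identification of $[A_1],[A_2]$ is exactly the paper's argument: both write $[A_1]=a[L_1]+b[L_2]$, use $A_1+A_2=K_S$ together with $[K_S]=2[L_1]+2[L_2]$, and reduce $A_1\cdot A_2=c_2(S)=4$ to $(a-1)(b-1)=-1$, forcing $(a,b)\in\{(2,0),(0,2)\}$; so that half is the same proof.

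Where you genuinely diverge is the uniformization. The paper disposes of it in one line, citing the split tangent bundle theorems of \cite{yau} and \cite{beauville}, which directly give that a surface of general type with $\Omega^1_S=\hol_S(A_1)\oplus\hol_S(A_2)$ has universal cover $\HH\times\HH$ \emph{with the splitting induced by the product decomposition} --- and this compatibility is what makes $\Ga$ preserve the two factors, i.e.\ $\Ga<\PP SL(2,\RR)\times\PP SL(2,\RR)$. You instead manufacture a semispecial tensor: the middle summand $\hol_S(A_1+A_2)\cong\omega_S$ of $S^2\Omega^1_S$ gives a nowhere-vanishing section of $S^2\Omega^1_S(-K_S)$, so the criterion recalled before the lemma (from \cite{cf}, \cite{cds}) applies with $\eta=0$. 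This is a valid and self-contained route given what the paper states, and it has the merit of explaining \emph{why} the splitting hypothesis plugs into the displayed criterion (the unmixed case is precisely $\eta=0$, while a factor-swapping lattice would only produce a tensor twisted by a nontrivial $2$-torsion $\eta$). The one soft spot is your final step: ``the two foliations are globally distinguished since $2[L_1]\neq 2[L_2]$, so $\Ga$ cannot interchange the factors.'' As written this conflates objects on $S$ with objects upstairs; a deck transformation acts trivially on $S$, so the non-isomorphism of the summands does not by itself forbid a swapping element. What makes the step rigorous is the compatibility statement contained in the cited theorems: the pullback of your splitting to the universal cover \emph{is} the horizontal/vertical splitting of $\HH\times\HH$, so each of the two tautological foliations is separately $\Ga$-invariant (being the pullback of a sub-bundle of $T_S$), and no element of $\Ga$ can exchange them. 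With that observation made explicit, your argument is complete; in effect you are re-proving the factor-preservation that the paper simply absorbs into the references \cite{yau}, \cite{beauville}.
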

\proof Assume that $[A_1]=a[L_1]+b[L_2]$ with $a, b \in \ZZ$. Since $K_S=A_1+A_2$ and $[K_S]=2[L_1]+2[L_2]$,
$[A_2]=(2-a)[L_1]+(2-b)[L_2]$.

A Chern class computation shows that  $A_1A_2=c_2(S)=4$. That is,
$$a(2-b)+b(2-a)=4,~~i.e.,~~(a-1)(b-1)=-1.$$
Therefore either $a=2, b=0$ or $a=0, b=2$.

The last assertion  is known since long time, see   \cite{yau}, \cite{beauville}.

\qed.

\subsection{Even fake quadrics and Mori dream surfaces}

The following theorem follows from Theorems 3.9 and 3.10 of   \cite{keum-lee-2},
but we give another proof for the reader' s  convenience.

\begin{thm}\label{thm:moridream}
Let $S$ be an even  fake quadric.
Then $S$ is a Mori dream surface if and only if $L_1$ and $L_2$ are semiample,  equivalently,  if and only if
$S$ admits a finite morphism to $\PP^1 \times \PP^1$.

 The same assertion is true for any fake quadric which does not contain a negative curve.
\end{thm}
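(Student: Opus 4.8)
The plan is to reduce the statement to the semiampleness of the two boundary rays of the nef cone, and then to identify that semiampleness with the existence of a finite map to the quadric. By Lemma~\ref{lem:numS} and Corollary~\ref{cor:cones} we already know that $\Num(S)=\ZZ[L_1]\oplus\ZZ[L_2]$ and that
\[
\mathrm{Nef}(S)=\overline{\mathrm{Eff}}(S)=\{a[L_1]+b[L_2]\mid a,b\in\RR_{\ge 0}\},
\]
a rational polyhedral cone with extremal rays $\RR_{\ge0}[L_1]$ and $\RR_{\ge0}[L_2]$. Since $K_S=2L_1+2L_2$ is ample, $S$ is its own minimal and canonical model, and as $S$ carries no negative curve there are no divisorial contractions and (surfaces having no flips) no small modifications; hence the movable cone coincides with the nef cone and the Mori chamber decomposition consists of the single chamber $\mathrm{Nef}(S)$. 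By the Hu--Keel characterization of Mori dream spaces --- applicable because $c_1\colon\Pic(S)\to H^2(S,\ZZ)$ is an isomorphism, so $\Pic(S)_\QQ=\Num(S)_\QQ$ --- being a Mori dream surface is then equivalent to the nef cone being rational polyhedral and generated by semiample classes. The first condition holds automatically, so everything reduces to proving that $L_1$ and $L_2$ are semiample.

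For the equivalence with semiampleness I would argue as follows. If $S$ is a Mori dream surface, then on any such space the nef cone equals the semiample cone; since $L_1,L_2$ are nef by Proposition~\ref{prop:nefdivisor}(3), they are semiample. Conversely, if $L_1$ and $L_2$ are semiample, then every nef class $a[L_1]+b[L_2]$ with $a,b\in\ZZ_{\ge0}$ is represented by a sum of semiample divisors, hence is semiample; together with the trivial chamber structure above, Hu--Keel then yields that $S$ is a Mori dream surface. One technical point to dispatch here is that semiampleness must be read off from the numerical class: if $[D']=[D]$ in $\Num(S)$ with $D'$ semiample, then $D$ and $D'$ differ by a torsion class, and passing to a common multiple that both kills the torsion and trivialises the base locus of $D'$ shows $D$ is semiample as well; thus semiampleness depends only on the class in $\Num(S)$.

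It remains to match semiampleness of $L_1,L_2$ with a finite morphism $S\to\PP^1\times\PP^1$. If $L_1,L_2$ are semiample, a suitable multiple $mL_i$ is base-point free and, after Stein factorisation, defines a fibration $f_i\colon S\to B_i$ with connected fibres whose general fibre is numerically a positive multiple of $L_i$; since $q(S)=0$ forces $B_i\cong\PP^1$, we obtain two distinct fibrations $f_1,f_2\colon S\to\PP^1$. Their product $(f_1,f_2)\colon S\to\PP^1\times\PP^1$ contracts no curve, because a contracted curve would lie in a fibre of both $f_1$ and $f_2$, whereas the general fibres $F_1,F_2$ satisfy $F_1\cdot F_2>0$ (being positive multiples of $L_1,L_2$ with $L_1L_2=1$); a morphism of projective surfaces contracting no curve has finite fibres, hence is finite, and it is surjective by dimension. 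Conversely, given a finite morphism $\pi\colon S\to\PP^1\times\PP^1$, the pullbacks $\pi^*\hol(1,0)$ and $\pi^*\hol(0,1)$ are base-point free, hence semiample, and have self-intersection $0$ since $\pi$ is finite; being strictly effective of square zero, by Proposition~\ref{prop:nefdivisor}(4) they are numerically proportional to $L_1$ and $L_2$, and by the torsion remark above $L_1,L_2$ are semiample.

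Finally, for an arbitrary fake quadric $S$ containing no negative curve the even hypothesis is used only to produce the explicit basis $L_1,L_2$, and this can be bypassed. The absence of negative curves gives $C^2\ge0$ for every irreducible curve, so $\overline{\mathrm{Eff}}(S)=\mathrm{Nef}(S)$; as the rank-two intersection lattice has signature $(1,1)$ and represents $0$ over $\QQ$, its two null rays are rational and bound this cone. Choosing primitive integral generators $D_1,D_2$ of the two extremal rays, we have $D_i^2=0$ and $D_1\cdot D_2>0$ (equal to $1$ in the even and $2$ in the odd case), and the analogue of Proposition~\ref{prop:nefdivisor}(4) for classes of square zero holds; the three-fold equivalence then goes through verbatim with $L_1,L_2$ replaced by $D_1,D_2$. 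The main obstacle throughout is the implication from semiampleness to the Mori dream property: one must be sure that rational polyhedrality of $\overline{\mathrm{Eff}}(S)=\mathrm{Nef}(S)$ together with semiample generation genuinely forces finite generation of the Cox ring, which is exactly where the triviality of the Mori chamber structure (no negative curves, no flips) is essential; constructing the finite morphism and controlling the torsion ambiguity in semiampleness are the remaining, more routine, points.
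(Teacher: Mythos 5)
Your proposal is correct and follows essentially the same route as the paper: describe $\mathrm{Nef}(S)=\overline{\mathrm{Eff}}(S)$ as the quadrant spanned by $[L_1],[L_2]$ (Corollary~\ref{cor:cones}), reduce the Mori dream property to semiampleness of $L_1,L_2$ via a polyhedrality-plus-semiampleness criterion for surfaces, and pass between semiampleness of the $L_i$ and a finite morphism to $\PP^1\times\PP^1$ by Stein factorization. The only notable differences are that the paper cites the surface-specific criterion of \cite[Corollary 2.6]{K3} where you invoke Hu--Keel together with triviality of the Mori chamber structure, and that you spell out routine points (the torsion ambiguity in $\Pic(S)$, finiteness of the product morphism, and the reduction in the no-negative-curve case) which the paper leaves implicit.
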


\proof Note that $\Pic(S)\cong H^2(S,\ZZ)$ is finitely generated.
According to \cite[Corollary 2.6]{K3},
$S$ is a Mori dream surface if and only if $\mathrm{Eff}(S)$ is rational polyhedral and $\mathrm{Nef}(S)=\mathrm{SAmp}(S)$.

By \ref{cor:cones},
$$\mathrm{Nef}(S)=\overline{\mathrm{Eff}}(S)=\{a[L_1]+b[L_2]~|~a, b \in \RR_{\ge 0}\}.$$
Since $\mathrm{Eff}(S) \supseteq \mathrm{SAmp}(S)$,
it follows that $S$ is Mori dream surface if and only if $\mathrm{Nef}(S)=\mathrm{SAmp}(S)$,
if and only if $L_1, L_2$ are semi-ample.

It follows that  $S$ is a Mori dream surface if and only if $S$ has two fibrations $f_1, f_2 : S \ra \PP^1$.
These combine to yield a morphism $ f : S \ra \PP^1 \times \PP^1$ which is necessarily finite since the
second Betti  number of $S$ equals $2$.

Conversely, if $S$ has a finite morphism  $ f : S \ra \PP^1 \times \PP^1$,  $ q(S)=0$, and the
second Betti  number of $S$ equals $2$, then $S$ is a Mori dream surface with $ q(S) = p_g (S)=0$,
hence in particular it  is a $\QQ$-homology quadric.

Since the property of being a Mori dream space depends on the structure of $Num(S) \otimes \RR$,
by the cited criteria, in the case of an odd quadric we take a
basis of $Num(S)$ as in Lemma \ref{lem:intersectionF1} and set $L_1 : = Q_1 + Q_2$, $L_1 : = Q_1 - Q_2$.

If there are no negative curves, then  the cones $\mathrm{Nef}(S)$ and the closure of $\mathrm{Eff}(S)$ are again
equal to the first quadrant, and the proof runs exactly as in the even case.
\qed

\section{The Shavel type surfaces}

\begin{defin}A smooth projective surface $S$ shall be  called a {\bf Shavel surface of special unmixed type} if
 $$p_g(S)=q(S)=0, S=\mathbb{H}^2/\Gamma$$
 where $\Gamma$ is a cocompact, torsion-free, irreducible subgroup of
 $$\mathrm{Aut}(\mathbb{H}^2) \simeq \PP \mathrm{SL}(2, \RR)^2 \rtimes \ZZ/2,$$
 such that
  $$\Gamma <  \mathrm{SL}(2,\RR)\times \mathrm{SL}(2,\RR).$$

\end{defin}

Note that for a Shavel surface $S$ of unmixed type,
for $\forall \gamma=(\gamma_1, \gamma_2) \in \Gamma$, $\forall z=(z_1, z_2) \in \mathbb{H}^2$,
$$\gamma z=(\gamma_1 z_1, \gamma_2 z_2).$$
Hence $S$ admits two smooth foliations and $\Omega^1_S$ splits as the direct sum of  two invertible sheaves:
 $$\Omega^1_S=\L_1\oplus \L_2.$$

\begin{prop}\label{prop:shaveleven}A Shavel surface $S$ of special unmixed type is an even  fake quadric and $K_S$ is divisibe by 2 in $Pic(S)$.
\end{prop}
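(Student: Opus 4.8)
The plan is to verify the two assertions in Proposition~\ref{prop:shaveleven} in sequence: first that a Shavel surface $S$ of special unmixed type is an even fake quadric, and second that $K_S$ is $2$-divisible in $\Pic(S)$ (not merely in $\Num(S)$). Since $S = \HH^2/\Ga$ with $\Ga$ cocompact, torsion-free and irreducible inside $\PP\mathrm{SL}(2,\RR)^2$, the quotient is a smooth compact ball-quotient-type surface of general type, and the defining condition $p_g(S)=q(S)=0$ is built into the definition. By the Hirzebruch proportionality / Gauss--Bonnet relation for bidisk quotients one has $c_1^2 = 2c_2$, and combined with Noether's formula $\chi(\hol_S) = \frac{1}{12}(c_1^2+c_2)$ together with $\chi(\hol_S)=1-q+p_g=1$, this forces $c_2 = 4$, hence $K_S^2 = c_1^2 = 8$ and $b_2(S) = c_2 - 2 = 2$. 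This exhibits $S$ as a $\QQ$-homology quadric of general type, so the content remaining is to show the intersection form on $\Num(S)$ is even.

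First I would use the splitting $\Omega^1_S = \L_1 \oplus \L_2$ coming from irreducibility of the two foliations, which is already recorded in the excerpt just before the proposition. Applying Lemma~\ref{lem:cotangent} to this splitting, I obtain that (after possibly relabelling) $[\L_1] = 2[L_1]$ and $[\L_2] = 2[L_2]$ in $\Num(S)$, where $L_1, L_2$ are the standard isotropic generators furnished by Lemma~\ref{lem:numS}; moreover that same lemma confirms the universal cover is $\HH\times\HH$, consistent with the hypothesis. The crucial point this gives is that the two tautological line bundles $\L_1, \L_2$ associated to the two $\PP\mathrm{SL}(2,\RR)$-factors are genuine line bundles on $S$ (not just numerical classes), because the foliations are globally defined; their sum is $K_S = \L_1 \otimes \L_2$. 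By Lemma~\ref{evenodd}, the intersection form is even precisely when $K_S$ is $2$-divisible in $\Num(S)$, and $[K_S] = 2[L_1] + 2[L_2] = 2([L_1]+[L_2])$ is visibly $2$-divisible there. This settles that $S$ is an \emph{even} fake quadric.

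For the second, stronger claim---$2$-divisibility of $K_S$ in $\Pic(S)$ rather than only in $\Num(S)$---I would argue directly at the level of line bundles, not numerical classes. The lift condition $\Ga < \mathrm{SL}(2,\RR)\times\mathrm{SL}(2,\RR)$ is exactly what is needed: each factor $\mathrm{SL}(2,\RR)$ acts on the upper half plane with an equivariant square root of the canonical (i.e. a theta-characteristic-type automorphy factor of weight $1$ rather than weight $2$), so that the foliation line bundle $\L_i$ descends to $S$ as a $\Ga$-equivariant, hence genuine, line bundle whose square is the cotangent contribution of that factor. Concretely, $K_{\HH} = \hol(-2)$ has a canonical $\mathrm{SL}(2,\RR)$-equivariant square root $\hol(-1)$; since $\Ga$ lifts to $\mathrm{SL}(2,\RR)\times\mathrm{SL}(2,\RR)$, each $\L_i$ itself admits a square root line bundle $M_i$ on $S$, and then $K_S = \L_1\otimes\L_2 = (M_1\otimes M_2)^{\otimes 2}$ is $2$-divisible in $\Pic(S)$.

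I expect the main obstacle to be precisely this last step: upgrading divisibility from $\Num(S)$ to $\Pic(S)$, because $\Pic(S)$ can carry torsion (indeed $\Tors(S)$ may be nonzero and can contain $4$-torsion, as flagged in the earlier remarks), and divisibility in the quotient $\Num(S) = \Pic(S)/\Tors(S)$ does not automatically lift. The resolution must exploit the explicit lifting hypothesis $\Ga \subset \mathrm{SL}(2,\RR)^2$ to produce an honest equivariant square root of each automorphy factor, rather than merely a numerical one; this is the conceptual heart of why the condition ``$\Ga$ lifts to $\mathrm{SL}(2,\RR)\times\mathrm{SL}(2,\RR)$'' appears in the definition of Shavel surface of \emph{special} unmixed type. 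The numerical input (Lemmas~\ref{lem:numS}, \ref{lem:cotangent}, \ref{evenodd}) handles everything except this genuine line-bundle descent, which is where the special hypothesis does its real work.
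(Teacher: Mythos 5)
Your third paragraph is, in substance, the paper's entire proof: because $\Ga$ lies in $\mathrm{SL}(2,\RR)\times \mathrm{SL}(2,\RR)$ rather than merely in $\PP \mathrm{SL}(2,\RR)^2$, the weight-one factor of automorphy $(c_1z_1+d_1)(c_2z_2+d_2)$ is a well-defined cocycle for $\Ga$ (for $\PP \mathrm{SL}$ it would only be defined up to sign), hence it defines a genuine line bundle on $S$ whose square is $K_S$; the paper records this as $K_S=2(L_1+L_2)$, and factorwise as $\sL_j=2L_j$ for the two summands $\sL_j$ of $\Omega^1_S$. So the conceptual heart of your proposal --- that the lifting hypothesis produces an honest equivariant square root of the canonical automorphy factor, and that this is ``where the special hypothesis does its real work'' --- is exactly right and coincides with the paper's argument.

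The flaw is in your second paragraph, which is circular as written. Lemma \ref{lem:numS} and Lemma \ref{lem:cotangent} are both stated under the standing Assumption of their section, namely that $S$ is \emph{already} an even fake quadric: the isotropic classes $L_1,L_2$ with $L_1L_2=1$ exist only because the intersection form is assumed to be the hyperbolic plane (in the odd case $\mathrm{diag}(1,-1)$ every isotropic class is a multiple of $Q_1+Q_2$ or $Q_1-Q_2$, so any two isotropic classes have even intersection and no such basis exists), and Lemma \ref{lem:cotangent} is deduced from Lemma \ref{lem:numS}. You therefore may not invoke them to prove evenness. The repair is simply to reverse the order of your last two steps, which is what the paper does: first establish $K_S=(M_1\otimes M_2)^{\otimes 2}$ in $\Pic(S)$ by the automorphy-factor argument (which uses nothing about the intersection form), then pass to $\Num(S)$ and apply Lemma \ref{evenodd} --- stated for an arbitrary fake quadric --- to conclude that the form is even. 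Together with your first paragraph (proportionality plus Noether giving $K_S^2=8$, $b_2(S)=2$, so that $S$ is a fake quadric; the paper leaves this implicit), this reordering yields a complete proof identical in substance to the paper's.
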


\begin{proof} It suffices to show $K_S$ is divisible by $2$ in $Pic(S)$.

The automorphic factor of the canonical bundle  is  the inverse of the jacobian determinant $\frac{1}{(c_1z_1+d_1)^2(c_2z_2+d_2)^2}$,
for $\gamma=(\gamma_1, \gamma_2)$ and where
\begin{align*}
\gamma_i=\left(\begin{array}{cc} a_i & b_i \\ c_i & d_i\end{array}\right), i=1, 2.
\end{align*}

This shows immediately that  $K_S$ is the square of the automorphic factor
$(c_1z_1+d_1)(c_2z_2+d_2)$, hence  
$$ K_S =  2 (L_1 + L_2), \L_j =  2 L_j$$
 and our claim follows.
\end{proof}

\begin{thm}\label{thm:shavel}A Shavel surface of unmixed type $S$ is not a Mori-dream surface.
\end{thm}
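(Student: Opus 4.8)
The plan is to use Theorem~\ref{thm:moridream}: since $S$ is an even fake quadric, it is a Mori dream surface if and only if it carries two fibrations onto $\PP^1$, so it suffices to show that $S$ admits \emph{no} fibration $f\colon S\to\PP^1$ at all. I would assume such an $f$ exists and derive a contradiction from the irreducibility of $\Gamma$.

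First I would fix the geometry coming from the uniformization. The product of the two Poincar\'e metrics on $\HH\times\HH$ is invariant under $\Aut(\HH^2)$, hence under $\Gamma$, so the two factor forms descend to smooth closed semipositive $(1,1)$-forms $\om_1,\om_2$ on $S$, with $[\om_i]$ proportional to $2[L_i]$ and with $\ker\om_1$ equal to the tangent directions of the second factor, i.e. to the foliation $\sF_2$ whose leaves are the images of the slices $\{z_1\}\times\HH$. Let $F$ be a general fibre of $f$; then $F$ is smooth with $F^2=0$, so by Proposition~\ref{prop:nefdivisor}(4) we may assume $[F]=a[L_1]$ with $a>0$. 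The key numerical input is
$$\int_F\om_1 \;=\; \mathrm{const}\cdot (L_1\cdot F)\;=\;\mathrm{const}\cdot a\,L_1^2\;=\;0 .$$
Since $\om_1\ge 0$ and $F$ is compact, this forces $\om_1|_F\equiv 0$, that is $T_F\subseteq\ker\om_1$; equivalently, lifting $F$ to $\HH^2$, the first projection $p_1$ is constant on each connected lift $\tilde F$. Hence $\tilde F=\{z_1\}\times\HH$ for some $z_1\in\HH$, and $F$ is a compact leaf of the vertical foliation $\sF_2$.

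Next I would extract the contradiction from compactness together with irreducibility. Since $\tilde F=\{z_1\}\times\HH$, the image $F$ equals $\tilde F/\Stab(z_1)$, where $\Stab(z_1)=\{\ga\in\Gamma:\ga_1z_1=z_1\}$ acts on the second factor through $p_2$. Compactness of $F$ forces $p_2(\Stab(z_1))$ to be a cocompact lattice in the second $\PP\SL(2,\RR)$; in particular $\Stab(z_1)\neq\{1\}$. Now irreducibility of $\Gamma$ means precisely that the projection $p_1\colon\Gamma\to\PP\SL(2,\RR)$ is injective (equivalently $\Gamma\cap(\{1\}\times\PP\SL(2,\RR))=\{1\}$). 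Hence for every $\ga\neq 1$ one has $\ga_1\neq 1$, so $\Fix(\ga_1)\subset\HH$ consists of at most one point, and therefore
$$\{\,z_1\in\HH : \Stab(z_1)\neq\{1\}\,\}\;=\;\bigcup_{\ga\neq 1}\Fix(\ga_1)$$
is a countable set. On the other hand the fibres of $f$ are pairwise disjoint, so their lifts are disjoint vertical slices and the values $z_1(t)$, $t\in\PP^1$, are uncountably many distinct points of $\HH$. Thus some fibre corresponds to a $z_1$ with $\Stab(z_1)=\{1\}$, whose lift $\{z_1\}\times\HH\cong\HH$ then maps injectively into $S$; this is a non-compact curve, contradicting compactness of fibres. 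Therefore no fibration $S\to\PP^1$ exists, and by Theorem~\ref{thm:moridream} the surface $S$ is not a Mori dream surface.

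The step I expect to be the main obstacle is the implication ``$\om_1|_F\equiv 0\Rightarrow F$ is a compact vertical leaf with nontrivial, indeed cocompact, stabilizer''. One must justify carefully that the semipositive factor form $\om_1$ has kernel exactly $T\sF_2$, that vanishing of its restriction to the compact curve $F$ really pins each lift inside a single slice $\{z_1\}\times\HH$ (so that $F$ is genuinely a leaf, not merely everywhere tangent to the foliation), and that compactness of $F$ translates into cocompactness of $p_2(\Stab(z_1))$. Once this is in place the irreducibility input is light---only that $p_1$ is injective, so that each nontrivial $\ga$ fixes at most one point of the first factor---and the countability versus uncountability count closes the argument. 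One could alternatively invoke Margulis' normal subgroup theorem for the irreducible higher-rank lattice $\Gamma$ (a fibration would produce an infinite normal subgroup of infinite index), but the metric argument above is more elementary and stays within the framework set up in the paper.
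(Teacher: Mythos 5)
Your proof is correct, but it takes a genuinely different route from the paper's. Both arguments go through Theorem \ref{thm:moridream} and both exploit irreducibility of $\Ga$, but where you use the \emph{fibration} formulation, the paper uses the \emph{semiampleness} formulation and proves the stronger statement that $|nL_1|=\emptyset$ for every $n\ge 1$ (so the Iitaka dimension of each $L_i$ is $-\infty$): a section of $nL_1$ is an automorphic function on $\HH\times\HH$ whose automorphy factor involves only the first variable; since irreducibility makes $p_1(\Ga)$ \emph{dense} in $\SL(2,\RR)$, the functional equation propagates to the whole group, and evaluating on rotations (maximal compact subgroups) kills every Taylor coefficient. You instead rule out the existence of any fibration $S\to\PP^1$: the fibre class is proportional to $[L_1]$ by Proposition \ref{prop:nefdivisor}(4), so the fibre pairs to zero against the descended Poincar\'e form $\om_1$ and must be a compact leaf of the vertical foliation; compact leaves lie over points $z_1$ whose stabilizer is a cocompact (in particular nontrivial) Fuchsian group, and there are only countably many such $z_1$ because irreducibility makes $p_1|_\Ga$ \emph{injective} --- against the uncountably many disjoint smooth fibres. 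Thus the paper's proof buys more (no multiple of $L_i$ is even effective, which is what connects to the McQuillan remark following the theorem), while yours is softer, avoids the explicit cocycle computation, and in effect reproves the classical fact that irreducible bidisk quotients admit no fibration onto any curve. Two small points to tighten: (i) irreducibility does not ``mean precisely'' that $p_1|_\Ga$ is injective; injectivity is a standard \emph{consequence} (for lattices it is in fact equivalent), and for Shavel's arithmetic $\Ga$ it is immediate because $\ga_2$ is the Galois conjugate of $\ga_1$; (ii) for the equality $\tilde F=\{z_1\}\times\HH$ (rather than the mere inclusion your tangency argument gives), note that $\tilde F$, being a component of the preimage of the compact curve $F$, is closed in $\HH^2$, and being a one-dimensional submanifold inside the one-dimensional slice it is open in it, hence equals the connected slice; also the counting should be restricted to the (still uncountably many) smooth fibres. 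Neither point affects the validity of the argument.
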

\proof We have seen that $S$ is an even  fake quadric in  the above proposition \ref{prop:shaveleven}. We use the notation of  Section~2.

It suffices to prove that $|nL_1|=\emptyset$ for any $n\ge 1$.

 As remarked above,
$S$ admits two smooth foliations and $\Omega^1_S$ splits as the sum of  two invertible sheaves:
 $$\Omega^1_S=\L_1\oplus \L_2,$$
 where, by \ref{prop:shaveleven}, we have $\L_1= 2L_1, \L_2 = 2L_2$.

Since $\mathrm{Tors}(S)$ is a finite abelian group, it suffices to show that
$$| n  L_1 |=\emptyset, \forall n\ge 1.$$

In fact, $n L_1$ is an automorphic line bundle on $\mathbb{H}\times \mathbb{H}$
corresponding to the   following cocycle: first we take the  
first projection map $$p_1 \colon \PP \mathrm{SL}(2,\RR) \times \PP\mathrm{SL}(2,\RR) \rightarrow \PP \mathrm{SL}(2,\RR)$$
$$\gamma=(\gamma_1, \gamma_2) \mapsto \gamma_1.$$

 Then to $\ga_1$,  such that 
 $$\ga_1(z_1) = \frac{a (\ga_1)  z_1 + b(\ga_1)}{c (\ga_1)  z_1 + d(\ga_1)}$$
 we associate the automorphic factor  $(c (\ga_1)  z_1 + d(\ga_1) )$.
 
Since $\Gamma$ is irreducible, $p_1(\Gamma)$ is dense.

We claim that $H^0(S, n L_1)=0$ for $ n \geq 1$.

In fact, every section of $H^0(S, n L_1)$ is represented by a function $f$ which satisfies the functional equation
$$ f (\ga_1 z_1, \ga_2 z_2) = (c (\ga_1)  z_1 + d(\ga_1) )^n f (z_1, z_2).$$

By density of $p_1(\Gamma)$, this holds for each $\ga_1 \in \mathrm{SL}(2, \RR)$.

Here $\ga_2$ is obtained by applying the involution of the quaternion algebra. If we take now $\ga_1$ to be in a maximal compact subgroup,  the stabilizer of one point, then the same holds for $\ga_2$, hence using the biholomorphism of $\HH$ with the unit disk, and choosing suitable coordinates, we can assume that
$\ga_1 (z_1) = \la^2 z_1, \ga_2 (z_2) = \mu^2  z_2$.

Hence, setting $$  f (z_1, z_2) : = \sum_{i,j} a_{i,j} z_1^i  z_2^j ,$$ we get
 $$f (\la^2 z_1, \mu^2 z_2) = \la^{-n}  f (z_1, z_2) \Leftrightarrow
a_{i,j} \la^{2i} \mu ^{2j} -  \la^{-n} a_{i,j}=0 , \forall i,j \geq 0 .$$

Set now $j=0$: then  $a_{i,0} \la^{2i}  -  \la^{-2n} a_{i,0}=0$ holds for each $\la$, hence we get a  Laurent polynomial   in $\la$ whose coefficients are all vanishing,
hence $ a_{i,0} =0$ for each $ i \geq 0$. Therefore $f (z_1, 0) $ vanishes identically,
and $f (z_1, z_2)$ vanishes identically for $z_2=0$.

Varying now the maximal compact subgroup to which $\ga_1$ belongs, we obtain all the maximal compact subgroups
 to which $\ga_2$ belongs.

 Hence we have shown, for each choice of  $w_2$, that $f (z_1, z_2)$ vanishes identically for   $z_2= w_2$.

We conclude that the section determined by the
function $f$ vanishes identically on the surface $S$.

 \qed

\begin{remark}

One can formulate the last argument as showing that the Iitaka dimension of $L_j$ is   $- \infty$.

Note that a more general   result, but with a less elementary proof,  is contained in  Proposition IV.5.1 of \cite{mcquillan},
  saying that the canonical model of a surface foliation with numerical dimension $1$ has Iitaka dimension either   $- \infty$ or $1$
   (and in Example  II.2.3  it is stated that   the first alternative applies for   Hilbert modular surfaces).
\end{remark}

\section{Odd fake quadrics}
In this section, we assume that $S$ is an odd  fake quadric.
Recall that, over $\CC$,  $S$ contains no smooth rational curves \cite[Proposition~2.1.1]{miyaoka}
and in particular $K_S$ is ample.

\subsection{The intersection form}
\begin{lem}\label{lem:intersectionF1}
There exist $Q_1, Q_2 \in \Pic(S)$ such that
 $$Q_1^2=1, Q_2^2=-1, Q_1Q_2=0, K_S =  3Q_1-Q_2.$$
The numerical classes $[Q_1]$ and $[Q_2]$ are uniquely determined in $\mathrm{Num}(X)$.

Moreover, for any such $Q_1, Q_2$,
 \begin{enumerate}
    \item  $h^0(S, 3Q_1)\ge 1$ and $Q_1$ is nef and big;
    \item  $Q_1$ is ample unless $S$ contains an irreducible curve $C$ such that $C\sim Q_2$;
     \item   $Q_1$ is semiample.
   \end{enumerate}
\end{lem}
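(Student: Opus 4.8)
The plan is to read off $Q_1,Q_2$ from the structure of the lattice $\mathrm{Num}(S)$, and then establish the positivity statements by Riemann--Roch, adjunction, and a contraction argument.

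\textbf{Existence and uniqueness.} Since $S$ is an odd fake quadric, $\mathrm{Num}(S)$ is the odd unimodular lattice of signature $(1,1)$, so it has an orthogonal basis $e_1,e_2$ with $e_1^2=1$, $e_2^2=-1$. Writing $[K_S]=ae_1+be_2$ and using $K_S^2=8$ gives $(a-b)(a+b)=8$ with $a,b$ of equal parity, whence $a=\pm3$, $b=\pm1$; after adjusting the signs of $e_1,e_2$ I may assume $[K_S]=3e_1-e_2$. I would then lift $e_1$ to some $Q_1\in\Pic(S)$ (possible since $c_1\colon\Pic(S)\to H^2(S,\ZZ)$ is an isomorphism) and \emph{define} $Q_2:=3Q_1-K_S$, so that $K_S=3Q_1-Q_2$ holds in $\Pic(S)$ and $[Q_2]=e_2$. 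Uniqueness of $[Q_1],[Q_2]$ follows because the only classes of square $+1$ (resp. $-1$) in this lattice are $\pm e_1$ (resp. $\pm e_2$), and the coefficient $3$ in $3Q_1-Q_2=K_S$ pins down the signs.

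\textbf{Part (1).} Riemann--Roch gives $\chi(3Q_1)=1+\tfrac12(3Q_1)(3Q_1-K_S)=1+\tfrac12(3Q_1)(Q_2)=1$, and Serre duality gives $h^2(3Q_1)=h^0(K_S-3Q_1)=h^0(-Q_2)$. Since $K_S\cdot(-Q_2)=-1<0$ and $K_S$ is ample, $-Q_2$ is not effective, so $h^2(3Q_1)=0$ and $h^0(3Q_1)\ge\chi(3Q_1)=1$. In particular $3Q_1$ is effective with $(3Q_1)^2=9>0$, so $Q_1$ is big. For nefness I write an irreducible curve as $C\sim xQ_1+yQ_2$; then $Q_1\cdot C=x$, $K_S\cdot C=3x+y>0$, and since $S$ carries no smooth rational curves, adjunction gives $C^2+K_S\cdot C=x^2-y^2+3x+y\ge0$. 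Assuming $x\le-1$ together with $3x+y>0$ leads at once to a numerical contradiction, so $x\ge0$ and $Q_1$ is nef.

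\textbf{Part (2).} By Nakai--Moishezon and nefness, $Q_1$ is ample iff $Q_1\cdot C>0$ for every irreducible $C$. If $Q_1\cdot C=0$, i.e. $x=0$, then $C\sim yQ_2$ with $K_S\cdot C=y>0$, and adjunction gives $2p_a(C)-2=-y^2+y\ge0$, forcing $y=1$; hence $C\sim Q_2$, with $C^2=-1$ and $p_a(C)=1$. Conversely such a curve satisfies $Q_1\cdot C=Q_1\cdot Q_2=0$, so $Q_1$ fails to be ample; this is exactly statement (2). Note also that two distinct such curves would meet in $Q_2^2=-1<0$, so $C$ is unique when it exists.

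\textbf{Part (3).} If $Q_1$ is ample it is semiample, so I may assume the curve $C\sim Q_2$ exists. The plan is to contract it and descend $Q_1$. Since $C^2=-1$ is negative definite, $C$ contracts to a normal projective surface $\pi\colon S\to X$ (a normal Moishezon surface is projective). The crucial point --- and the main obstacle --- is to show that $\hol_S(Q_1)|_C$ is torsion, so that a multiple of $Q_1$ descends to a genuine line bundle on $X$. Here $C$ is an integral Gorenstein curve with $\omega_C=(K_S+C)|_C$ of degree $2p_a(C)-2=0$, hence $\omega_C\cong\hol_C$; writing $\hol_S(C)\cong\hol_S(Q_2)\otimes\tau$ with $\tau\in\Tors(S)$ and using $3Q_1=K_S+Q_2$ in $\Pic(S)$ yields $3Q_1|_C\cong(K_S+C)|_C\otimes(\tau|_C)^{-1}\cong\omega_C\otimes(\tau|_C)^{-1}$, which is torsion. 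Thus $Q_1|_C$ is torsion, some $mQ_1|_C\cong\hol_C$, and $mQ_1\cong\pi^*M$ for a line bundle $M$ on $X$. By Nakai--Moishezon on $X$ the bundle $M$ is ample (as $M^2=Q_1^2=1>0$ and $M$ is positive on every curve, these being the images of the curves $C'\ne C$ with $Q_1\cdot C'>0$), so $mQ_1=\pi^*M$ is base-point free and $Q_1$ is semiample. The delicate step is precisely this torsion computation, which is what rules out the Zariski-type phenomenon of a nef and big divisor that fails to be semiample.
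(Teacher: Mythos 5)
Your lattice argument for the existence and uniqueness of $[Q_1],[Q_2]$, the lifting trick $Q_2:=3Q_1-K_S$ (which is the paper's ``after replacing $Q_2$'' step), and your proofs of (1) and (2) are correct and essentially identical to the paper's; the only cosmetic difference is that the paper deduces nefness of $Q_1$ from its classification of negative curves (Proposition \ref{prop:noratcurve}), whereas you run the numerical estimate directly. Your torsion computation $3Q_1|_C\cong\omega_C\otimes(\tau|_C)^{-1}$ with $\omega_C\cong\hol_C$ is also exactly the paper's.

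The gap is in part (3), and it is twofold. First, the parenthetical justification ``a normal Moishezon surface is projective'' is false, and it fails precisely because of the Zariski-type phenomenon you yourself say must be ruled out. Blow up $\PP^2$ in twelve very general points of a smooth cubic and let $C$ be the strict transform, an elliptic curve with $C^2=-3$: by Grauert's criterion $C$ contracts to a normal compact Moishezon surface $X$, but $X$ is not projective, since an ample line bundle $L$ on $X$ would pull back to a line bundle on the blow-up that is trivial on a neighbourhood of $C$, hence orthogonal to $C$ and \emph{trivial} (not merely torsion) on $C$; for very general points the only such line bundle is the trivial one, contradicting $(\pi^*L)^2=L^2>0$. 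So in your setting the projectivity of $X$ is a \emph{consequence} of descending $mQ_1$, not something you may assume at the outset. The argument can be repaired by reordering: contract via Artin's theorem to a normal \emph{proper algebraic space} $X$, descend $mQ_1$ to a line bundle $M$, and only then apply the Nakai--Moishezon criterion (valid on proper algebraic spaces) to conclude that $M$ is ample, hence that $X$ is projective and $Q_1$ is semiample.

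Second, the descent step is itself unjustified as written: triviality of $mQ_1|_C$ does \emph{not} in general imply $mQ_1\cong\pi^*M$. One needs triviality on the formal neighbourhood of $C$, and the obstruction to propagating triviality from the $n$-th to the $(n+1)$-st infinitesimal neighbourhood lies in $H^1(C,\hol_C(-nC))$. In your situation these groups do vanish: $\hol_C(-nC)$ has degree $n>0$ on the integral Gorenstein curve $C$, and since $\omega_C\cong\hol_C$, Serre duality identifies $H^1(C,\hol_C(-nC))$ with the dual of $H^0$ of a line bundle of degree $-n<0$, which is zero. But this vanishing is exactly where $p_a(C)=1$ and $C^2=-1$ enter, and it must be said; for a negative curve of higher genus the obstruction spaces need not vanish. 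Note that the paper's proof of (3) avoids contraction entirely: it uses Kawamata--Viehweg vanishing (via $3mQ_1-C\sim K_S+3(m-1)Q_1$) to show that the trace of $|3mQ_1|$ on $C$ is complete and base-point free, deduces that the moving part of $|3mQ_1|$ is proportional to $Q_1$ and that $|kQ_1|$ has no fixed part for large divisible $k$, and concludes by Zariski's theorem --- a route that needs neither the projectivity of the contraction nor the formal-neighbourhood analysis.
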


\begin{proof}
The intersection form on $\mathrm{Num}(S)$ is $\mathrm{diag}(1,-1)$.

Hence there exist divisors $Q_1, Q_2$ such that $Q_1^2=1, Q_2^2=-1, Q_1Q_2=0$.

 We may assume $K_S \cdot Q_1 \ge 0$ and $K_S \cdot Q_2\ge 0$ by possibly  replacing $Q_i$ with  $-Q_i$.
 Then $K_S\sim aQ_1+bQ_2$ with $a, b \in\ZZ, a\ge 0, b\le 0$.
 Since $K_S^2=8$, $a^2-b^2=8$. It follows  that $a=3, b=-1$ and $K_S \sim 3Q_1-Q_2$.

 Therefore $K_S=3Q_1-Q_2+\eta$ for some $\eta \in \mathrm{Tors}(S)$, and we can assume $K_S=3Q_1-Q_2$
 after replacing $Q_2$.

  Note that $h^2(S, 3Q_1)=h^0(S, K_S-3Q_1)=h^0(S,-Q_2)$ and $K_S(-Q_2)=-1$. Since $K_S$ is ample, $h^2(S, 3Q_1)=0$. Then the Riemann-Roch theorem shows
 $$h^0(S, 3Q_1)\ge \frac{1}{2}(3Q_1)(Q_2)+\chi(\hol_S)=1.$$

 Let $C$ be an irreducible curve. We write  $C\sim a Q_1+bQ_2$ with $a=CQ_1$ and $b=-CQ_2$.
 Then $K_SC=3a+b$ and $C^2=a^2-b^2$.

 In order to see whether  $Q_1$ is nef, respectively ample, assume that $CQ_1\le 0$, i.e., $a\le 0$.

 Then $C$ is a negative curve, and, by the next Proposition \ref{prop:noratcurve},
 the class of $C$ equals the class of $(b-1)Q_1+bQ_2$, and we are done unless $b \leq 1$.

 However, since $K_S$ is ample and $3(b-1) + b = K_SC>0$, $b \geq 1$. If $b=1$, then $C \sim  Q_2$.
 Then we have shown that $Q_1$ is nef, and then (1) and  (2) are  proven.

For (3), we may assume that $Q_1$ is not ample. Then by (2), there is an irreducible curve $C\sim Q_2$. Note that $p_a(C)=1$ and thus $\mathcal O_C(K_S+C)\cong \omega_C \cong  \hol_C$. Moreover,
 $$3Q_1=K_S+C+\eta'$$
 for some $\eta' \in \mathrm{Tors}(S)$.

There exists $m>0$ such that
\begin{itemize}
    \item $m\eta'=0$, and thus $3mQ_1|_C\cong \hol_C$.
    \item $h^0(3mQ_1)\gg 0$.
\end{itemize}
Note that $Q_1$ is nef and big, and that
$$3mQ_1-C\sim K_S+3(m-1)Q_1.$$
By Kawamata-Viehweg vanishing theorem, we have
$$H^1(S, 3mQ_1-C)=0.$$
So the trace of $|3mQ_1|$ on $C$ is complete and base-point-free.

Write $|3mQ_1|=|M|+F$, where $|M|$ is the movable part and $F$ is the fixed part. The discussion above shows that $F\not\ge C$ and thus $FC\ge 0$. Since $3mQ_1.C=0$, we conclude that $M.C=0$ and $F.C=0$.
It follows that $M\sim \lambda Q_1$ for some positive integer $\lambda$.
Because $\mathrm{Tors}(S)$ is finite, $|kQ_1|$ has no fixed part for sufficiently large and divisible $k>0$. By a theorem of Zariski, $Q_1$ is semiample.
\end{proof}

Unlike the even fake quadric case, we do not know whether $S$ contains a negative curve or not  (but in the case it does not contain such a negative curve, we have determined the condition that it is a Mori deam space in Theorem \ref{thm:moridream}).

\begin{prop}\label{prop:noratcurve}
Let $C$ be an irreducible curve on $S$. Assume that $C^2<0$. Then
\begin{enumerate}
    \item $C\sim aQ_1+(a+1)Q_2$ for  some $a \in \ZZ_{\ge 0}$ and $p_a(C)=a+1$.
    \item For any irreducible curve $C_0 \not=C$, $C_0^2\ge 0$.
    \item Set $D:=(a+1)Q_1+aQ_2$. Then $DC=0$, $D$ is nef and big,
    moreover    $D$ is semiample  only if $\hol_C(D)$ is a torsion divisor.
    \item One of the sides of $\mathrm{Eff}(S)$ is $\RR_+[C]$ and one of the sides of $\mathrm{Nef}(S)$ is $\RR_+[D]$.
\end{enumerate}
\end{prop}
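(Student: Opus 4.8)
The plan is to establish the four claims in sequence, exploiting throughout the fact that $\mathrm{Num}(S)$ carries the diagonal form $\mathrm{diag}(1,-1)$ with the basis $Q_1,Q_2$ fixed in Lemma~\ref{lem:intersectionF1}, and that $K_S\sim 3Q_1-Q_2$ is ample. For (1), I would write the numerical class of the irreducible curve $C$ as $C\sim aQ_1+bQ_2$ with $a=CQ_1$, $b=-CQ_2$, so that $C^2=a^2-b^2<0$ forces $|b|>|a|$. Ampleness of $K_S$ gives $K_SC=3a+b>0$. I would then compute $2p_a(C)-2=C^2+K_SC=(a^2-b^2)+(3a+b)$ via adjunction, and combine $C^2<0$ with $p_a(C)\ge 0$ to pin down the possibilities. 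The key arithmetic point is that for an irreducible curve with negative self-intersection on a surface with $p_g=q=0$, the genus constraint $p_a(C)\ge 0$ together with $a^2-b^2<0$ and $3a+b>0$ should force $b=a+1$ (so $C^2=-(2a+1)$ and $p_a(C)=a+1$); I expect this to reduce to checking that no other integer solution with $a\ge 0$ survives the inequalities, which is a short finite case analysis once one notes $b>a\ge 0$ and that increasing $|b|-|a|$ beyond $1$ makes $C^2$ too negative relative to the adjunction bound.

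For (2), I would argue by the algebraic index theorem (Hodge index): since $\rho(S)=2$, two distinct irreducible curves $C,C_0$ with $C^2<0$ and $C_0^2<0$ would both lie in the negative part of the cone, but the signature $(1,1)$ of the intersection form permits at most a one-dimensional family of classes of negative self-intersection up to scaling, and two irreducible curves of negative self-intersection must then be numerically proportional. Proportionality of two distinct effective irreducible classes with $C^2<0$ is impossible unless $C\equiv C_0$, and I would rule out $C\equiv C_0$ for distinct irreducible curves by intersecting: $CC_0\ge 0$ while proportional negative classes give $CC_0<0$, a contradiction. This gives $C_0^2\ge 0$ for every irreducible $C_0\ne C$.

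For (3), set $D:=(a+1)Q_1+aQ_2$; a direct computation gives $DC=(a+1)a-a(a+1)=0$ and $D^2=(a+1)^2-a^2=2a+1>0$, so $D$ is big. Nefness follows from (2): any irreducible curve $C_0\ne C$ has $C_0^2\ge 0$, and since $D$ lies in the closed positive cone with $D^2>0$, the index theorem gives $DC_0\ge 0$ (one checks $D$ is on the correct side by noting $DK_S>0$), while $DC=0$ handles the remaining curve; hence $D$ is nef and big. For the semiampleness criterion I would mimic the argument in part (3) of Lemma~\ref{lem:intersectionF1}: writing $|mD|=|M|+F$ for the movable and fixed parts and using Kawamata--Viehweg vanishing for $mD-C\sim K_S+(\text{nef and big})$ to conclude the trace on $C$ is complete, one sees that $D$ semiample forces $\hol_C(D)$ to be torsion, since $DC=0$ means the restricted linear system is a base-point-free sub-system of a degree-zero line bundle on $C$, which is only possible if that bundle is torsion.

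For (4), the two extremal rays follow from the structure of $\mathrm{Num}(S)_\RR$. Since $DC=0$ and both $C,D$ have been identified numerically, I would show $\RR_+[C]$ is an edge of $\overline{\mathrm{Eff}}(S)$ by noting that any effective class pairs non-negatively with the nef class $D$ (so $\overline{\mathrm{Eff}}(S)$ lies in the half-plane $D\ge 0$, whose boundary ray through $[C]$ it must contain because $C$ is effective with $DC=0$), and dually $\RR_+[D]$ is an edge of $\mathrm{Nef}(S)$ because nef classes pair non-negatively with the effective class $C$, bounding $\mathrm{Nef}(S)$ by the half-plane $C\ge 0$ whose boundary ray is spanned by $[D]$ since $D$ is nef with $DC=0$. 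The main obstacle I anticipate is part (3): the numerical computation of $DC$ and $D^2$ is routine, but verifying the semiampleness criterion rigorously requires the Kawamata--Viehweg vanishing and Zariski-type argument to be adapted carefully to the curve $C$ of arithmetic genus $a+1>1$ (rather than the genus-one situation in Lemma~\ref{lem:intersectionF1}), and checking that completeness of the trace on $C$ really does reduce semiampleness to the torsion condition on $\hol_C(D)$ is the delicate step.
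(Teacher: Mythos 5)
Your proposal has genuine gaps in parts (1) and (2); since (2)--(4) all rest on the classification in (1), these gaps undermine the whole argument.

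In (1), you invoke only the constraint $p_a(C)\ge 0$. This is too weak: it gives $C^2+K_SC\ge -2$, and integer classes satisfying all of your inequalities but \emph{not} of the asserted form survive the case analysis. For example, $C\sim Q_1-2Q_2$ has $C^2=-3$, $K_SC=1$, $p_a(C)=0$, and $C\sim 2Q_2$ has $C^2=-4$, $K_SC=2$, $p_a(C)=0$; both satisfy $C^2<0$, $K_SC>0$ and $p_a\ge 0$, yet neither is of the form $aQ_1+(a+1)Q_2$ (and the first has $b<0$, so your reduction to ``$b>a\ge 0$'' is also unjustified). The ingredient you are missing is the one stated at the opening of the paper's section on odd fake quadrics and used explicitly in its proof: by \cite[Proposition~2.1.1]{miyaoka}, over $\CC$ the surface $S$ contains no smooth rational curves, hence every irreducible curve satisfies $p_a(C)\ge 1$, i.e.\ $C^2+K_SC\ge 0$. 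It is this stronger inequality, combined with $K_SC>0$, that forces $\delta:=|b|-|a|=1$ and the signs $a\ge 0$, $b=a+1$ in the paper's bookkeeping; with $p_a\ge 0$ alone the conclusion is simply false as an arithmetic statement.

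In (2), your lattice-theoretic argument is incorrect: in a rank-two lattice of signature $(1,1)$ the classes of negative square form an open cone, not a single ray up to scaling, and two non-proportional negative classes can intersect positively --- e.g.\ $(Q_1+2Q_2)\cdot(Q_1-2Q_2)=5$ while both classes have square $-3$. So ``two irreducible curves of negative self-intersection must be numerically proportional'' cannot be deduced from the Hodge index theorem, and (2) does not follow from signature considerations alone. The paper instead deduces (2) from (1): a second negative irreducible curve would have class $a_0Q_1+(a_0+1)Q_2$ with $a_0\ge 0$, whence $C\cdot C_0=aa_0-(a+1)(a_0+1)=-(a+a_0+1)<0$, contradicting $C\cdot C_0\ge 0$ for distinct irreducible curves. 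Your parts (3) and (4) are essentially sound (indeed (4) is spelled out more fully than in the paper, whose proof stops after (3)), but note two things about (3): the ``only if'' direction needs none of the Kawamata--Viehweg/trace machinery you import from Lemma~\ref{lem:intersectionF1} --- semiampleness gives a base-point-free $|mD|$ whose restriction to $C$ is a globally generated line bundle of degree $mD\cdot C=0$, hence trivial, so $\hol_C(D)$ is torsion --- and that machinery would in any case not apply here, since $(mD-C-K_S)\cdot C=-2a<0$ for $a>0$, so $mD-C-K_S$ is not nef.
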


\begin{proof}We may assume $C\sim aQ_1+bQ_2$ hence $a=CQ_1$ and $b=-CQ_2$.
Then by our assumption  $K_S   C=3a+b >0,  C^2=a^2-b^2 < 0$.

Set $ \al : = |a| $, then $ |b| = \al + \de$ with $\de >0$.

For some  $\e_1, \e_2 \in \{ 1, -1 \}$, we have $ a = \e_1 \al, b = \e_2 (\al + \de).$

The inequalities   $K_S   C=3a+b >0$ and $K_S   C + C^2 \geq 0$ (since $S$ contains no rational curve)
read out as:
$$ 3 \e_1 \al +  \e_2 (\al + \de) > 0, \ \  3 \e_1 \al +  \e_2 (\al + \de)  - 2 \de \al - \de^2 \geq 0.$$

If $\de \geq 2$, then $ - 2 \de \al  +  (3 \e_1  +  \e_2 )  \al \leq 0$, while $  \e_2  \de -  \de^2  <0$; hence
this contradiction shows that $\de=1$.

The first inequality excludes the possibility $\e_1 = \e_2 = -1$.

If $\e_1 = 1, \e_2 = -1$, then the second inequality tells that $ - 2 \geq 0$, absurd.

Hence $\e_2=1$, and $ (3 \e_1  + 1    - 2 ) \al  \geq 0$ shows that $\e_1 =1$.

Therefore $C\sim \al Q_1+ (\al + 1) Q_2$ and (1) is proven.

(2) follows right away because if $C_0$ is a different  negative curve,
$C_0\sim a_0Q_1+(a_0+1)Q_2$ with $a_0\ge 0$.

Then $C   C_0=aa_0-(a+1)(a_0+1)<0$, this is impossible and (2) is proven.

(3): clearly we  have $D^2=2a+1>0$ and $DC=0$.

From (1) and (2), we see $DC_0>0$ for any $C_0 \not= C$. In particular, $D$ is nef and big.

 By a Theorem of Zariski, saying that a nef and big divisor $D$
is asymptotically base point free if and only if there exists a large multiple $|mD|$ which  is without fixed part,
$D$ is semiample if and only if for each irreducible curve $C'$, $C'$ is not in the base locus of some $|mD|$
with $m$ positive.

Applying this to $C' = C$ we see that $\hol_C(D)$ must be  a torsion line bundle.
\end{proof}

\begin{cor}\label{lem:fakeF1fib}Assume that $f\colon S\rightarrow \PP^1$ is a fibration with
general fibre $F$. Then either $F\sim a(Q_1+Q_2)$ with $a \ge 1$ and $g(F)=2a+1$,  or $F\sim a(Q_1-Q_2)$ and $g(F)=a+1$ with $a\ge 2$.
\end{cor}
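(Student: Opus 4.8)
The plan is to pin down $[F]$ inside the rank-two lattice $\Num(S)=\ZZ Q_1\oplus\ZZ Q_2$ of Lemma \ref{lem:intersectionF1} and then discard the borderline genus-two case. First I would record the standing facts about the general fibre: $F$ is a smooth irreducible curve (it moves in the pencil, and its support is irreducible by Lemma \ref{fibration}(1)), $F$ is nef with $F^2=0$, and $K_S\cdot F=2g(F)-2>0$ because $K_S$ is ample and $F$ is a nonzero effective class. These are the only inputs needed to start the lattice computation.

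Writing $[F]=aQ_1+bQ_2$ with $a,b\in\ZZ$ and using $Q_1^2=1,\ Q_2^2=-1,\ Q_1Q_2=0$, the relation $F^2=a^2-b^2=0$ forces $b=\pm a$. Since $K_S=3Q_1-Q_2$ gives $K_S\cdot F=3a+b$, the inequality $K_S\cdot F>0$ simultaneously rules out $a\le 0$ and separates the two sign choices. If $b=a$ then $[F]=a(Q_1+Q_2)$ with $K_S\cdot F=4a$, so adjunction yields $g(F)=2a+1$ and $a\ge 1$; if $b=-a$ then $[F]=a(Q_1-Q_2)$ with $K_S\cdot F=2a$, so $g(F)=a+1$ and $a\ge 1$. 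This already produces the two stated families, and the entire remaining content is to upgrade $a\ge 1$ to $a\ge 2$ in the second family, i.e. to show that $g(F)=2$ cannot occur.

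The hard part is exactly this exclusion of a genus-two fibration, and I expect it to be the main obstacle. My plan is to exploit that every genus-two fibration is relatively hyperelliptic: the relative hyperelliptic involution is birational, hence (as $S$ is minimal of general type) a genuine involution $\tau$ of $S$, and it exhibits $S$ as a double cover $\pi\colon S\to W$ of a rational ruled surface $W\to\PP^1$ whose branch divisor $B$ is a $6$-section, meeting each ruling in the $2g+2=6$ Weierstrass points. In the model case $W=\FF_e$ with smooth $B\in|2L|$ one has $L\cdot f_W=3$, hence $L\sim 3C_0+m f_W$; imposing the two identities
\[
\chi(\hol_S)=2+\tfrac12\,L\,(L+K_{\FF_e})=1,\qquad K_S^2=2\,(K_{\FF_e}+L)^2=8
\]
forces the incompatible values $m=1+\tfrac{3e}{2}$ and $m=4+\tfrac{3e}{2}$, a contradiction. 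To turn this into a complete proof I would also treat singular branch loci via the canonical resolution, tracking how the correction terms alter $\chi(\hol_S)$ and $K_S^2$; I expect the contradiction to persist, since resolving branch singularities only decreases $K_S^2$ relative to $\chi(\hol_S)$. A more lattice-theoretic variant reaching the same conclusion: $\tau$ fixes both $[K_S]$ and $[F]$, which are independent in $\Num(S)_\QQ$, so $\tau$ is numerically trivial; the topological Lefschetz formula then gives $\chi_{\mathrm{top}}(\Fix\tau)=1+2+1=4$, while the one-dimensional part $R$ of $\Fix\tau$ is a smooth $6$-section ($R\cdot F=6$) all of whose components have genus $\ge 1$ (as $S$ contains no smooth rational curve), so $\chi_{\mathrm{top}}(R)\le 0$ and $\tau$ has at least four isolated fixed points, which one then plays against $K_S^2=8$.

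Once $g(F)=2$ is excluded, the ranges $a\ge 1$ in the first family and $a\ge 2$ in the second follow at once, completing the proof.
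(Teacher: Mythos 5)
Your opening lattice computation is correct, and it is essentially verbatim the paper's own proof: the paper writes $F\sim aQ_1+bQ_2$, uses $F^2=0$ to get $a^2=b^2$, uses $K_S\cdot F>0$ to fix the signs, and then simply declares that ``the two solutions are as stated.'' In particular the paper never justifies the inequality $a\ge 2$ in the second family, i.e.\ it never excludes a fibration of genus $2$; so you have correctly isolated the one claim in the statement that does not follow from the lattice argument, and flagging it as the real remaining content goes beyond what the paper records.

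However, your proposed exclusion of $g(F)=2$ has a genuine gap. In approach (a) you verify only the model case (smooth branch curve in $|2L|$ on a relatively minimal $\FF_e$), and the passage from an arbitrary genus-$2$ fibration to that model --- canonical resolution of the branch locus together with blow-downs of the quotient to a relatively minimal ruled surface --- is exactly the hard part. Both invariants drop under canonical resolution (per infinitely near singular point of multiplicity $m_i$, with $d_i=\lfloor m_i/2\rfloor$, one loses $2(d_i-1)^2$ from $K^2$ and $\tfrac{1}{2}d_i(d_i-1)$ from $\chi$), and the blow-downs modify the branch curve, so ``I expect the contradiction to persist'' is not a proof: organizing these corrections into an inequality is precisely the content of the known slope bound for genus-$2$ fibrations (Xiao, Horikawa, Persson), namely $K^2_{S/B}\le 7\deg f_*\omega_{S/B}$ for a relatively minimal genus-$2$ fibration. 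The clean fix is to invoke that theorem: here the fibration is relatively minimal because $S$ is minimal, all fibres are irreducible and (by Lemma~\ref{fibration}, since $g-1=1$) non-multiple, and one computes $K^2_{S/\PP^1}=K_S^2+4K_S\cdot F=16$ while $\deg f_*\omega_{S/\PP^1}=\chi(\mathcal{O}_S)+1=2$, giving slope $8>7$, a contradiction. Your alternative (b) is fine up to the conclusion that $\tau$ has at least four isolated fixed points (the numerical triviality of $\tau^*$ and the genus bound on fixed curves are correct), but ``which one then plays against $K_S^2=8$'' is not an argument --- no contradiction is actually derived there, and extracting one sends you back to the same double-cover bookkeeping as in (a).
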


\begin{proof}
Let $F \sim a Q_1 + b Q_2$: then $F^2=0$ amounts to $ a^2 = b^2$, that is,
$ a= \e_1 \al, b = \e_2 \al$, with $\al >0$,  $\e_j \in \{1, -1\}$.

 Since $K_S   F >0$, we get $ 3\e_1  + \e_2 >0$, hence $\e_1 =1$, and the two solutions are as stated.
\end{proof}

\begin{cor}\label{lem:fakeF1negaonefib}
Assume that $S$ contains a negative curve. Then $S$ admits at most one fibration, and  if there is a fibration $f \colon S \rightarrow \PP^1$  with  general fibre $F$, then $F\sim (g(F)-1)(Q_1-Q_2).$
\end{cor}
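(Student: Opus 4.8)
The plan is to combine the two previous results---Corollary~\ref{lem:fakeF1fib} on the two possible numerical classes of a fibre, and Proposition~\ref{prop:noratcurve} on the geometry forced by a negative curve---to rule out one of the two fibration types and to pin down the multiple-fibre divisibility. Suppose $S$ contains a negative curve $C$. By Proposition~\ref{prop:noratcurve}(1) we have $C\sim aQ_1+(a+1)Q_2$ for some $a\ge 0$, and by part (4) the ray $\RR_+[C]$ is one boundary edge of $\mathrm{Eff}(S)$ while $\RR_+[D]$ with $D=(a+1)Q_1+aQ_2$ is the corresponding edge of $\mathrm{Nef}(S)$. Any fibration $f\colon S\to\PP^1$ has general fibre $F$ with $F^2=0$, $F$ nef, and $[F]$ lying on the boundary of the nef cone; by Corollary~\ref{lem:fakeF1fib} the only two candidate classes are proportional to $Q_1+Q_2$ and to $Q_1-Q_2$.

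First I would argue that the class $Q_1+Q_2$ is incompatible with the presence of the negative curve $C$. The point is that $(Q_1+Q_2)\cdot C = Q_1^2 - (a+1)Q_2^2\cdots$ must be computed: using $Q_1^2=1$, $Q_2^2=-1$, $Q_1Q_2=0$ and $C\sim aQ_1+(a+1)Q_2$, one finds $(Q_1+Q_2)\cdot C = a+(a+1) >0$, whereas the fibre class of a fibration must be nef with $F\cdot C\ge 0$, and more importantly the edge of the nef cone realized by a semiample fibre class must be the edge $\RR_+[D]$ dual to $C$, not an interior ray. Since $D=(a+1)Q_1+aQ_2$ is proportional to neither $Q_1+Q_2$ nor $Q_1-Q_2$ in general, I must instead show directly that a fibre $F$ satisfies $F\cdot C=0$: indeed $F$ is nef and $C$ is the unique negative curve, so if $F\cdot C>0$ then $F$ lies strictly inside the nef cone and $F^2>0$, contradicting $F^2=0$. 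Hence $F\cdot C=0$, which forces $[F]\in\RR_+[D]$, so $[F]$ is proportional to $(a+1)Q_1+aQ_2$.

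Matching this against the two options in Corollary~\ref{lem:fakeF1fib}, the class $(a+1)Q_1+aQ_2$ can be proportional to $Q_1+Q_2$ only when $a+1=a$, which is impossible, so the first alternative is excluded; thus $F\sim \lambda(Q_1-Q_2)$ and by that Corollary $g(F)=\lambda+1$, giving $\lambda=g(F)-1$ and $F\sim (g(F)-1)(Q_1-Q_2)$. It remains to see that $S$ admits \emph{at most one} fibration. Two distinct fibrations would give two distinct boundary rays of $\mathrm{Nef}(S)$ realized by semiample classes $F,F'$; but both must be orthogonal to the unique negative curve $C$ and hence both lie on $\RR_+[D]$, so they are numerically proportional. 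Since a fibration is determined up to finite data by the numerical class of its fibre (two fibrations with proportional fibre classes share the same Stein factorization, as $F\cdot F'=0$ would force the fibres of one to be contained in fibres of the other), the two fibrations coincide.

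The main obstacle I anticipate is the last step: passing from numerical proportionality of the fibre classes to genuine uniqueness of the fibration. Numerically $F\sim F'$ (up to the rational scalar forced by $\RR_+[D]$) does not immediately yield that the two maps to $\PP^1$ agree, because of torsion in $\mathrm{Pic}(S)$ and the freedom in choosing the pencil. The clean way around this is to observe that $F\cdot F'=0$ for two fibres of distinct fibrations contradicts the Hodge index/connectedness of fibres unless the fibrations are equal: a general fibre of $f'$ must then be contracted by $f$, forcing $f$ and $f'$ to factor through one another. I would make this rigorous using that $F,F'$ both generate the same extremal ray and that each defines the morphism to $\PP^1$ via $|mF|$ for large $m$, so the associated morphisms have the same Stein factorization and hence the same fibres.
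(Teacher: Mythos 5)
Your argument breaks down at its central step, and the breakdown starts with a sign error. With $Q_1^2=1$, $Q_2^2=-1$, $Q_1Q_2=0$ and $C\sim aQ_1+(a+1)Q_2$, the correct computation is
\begin{equation*}
(Q_1+Q_2)\cdot C \;=\; a\,Q_1^2+(a+1)\,Q_2^2 \;=\; a-(a+1) \;=\; -1 \;<\;0,
\end{equation*}
not $a+(a+1)>0$ as you wrote. Ironically, the corrected computation \emph{is} the entire proof, and it is exactly the paper's proof: by Corollary \ref{lem:fakeF1fib} the fibre class is proportional to $Q_1+\epsilon Q_2$ with $\epsilon\in\{1,-1\}$, a fibre class is nef, and $(Q_1+\epsilon Q_2)\cdot C=a-\epsilon(a+1)$ is negative when $\epsilon=+1$; hence $\epsilon=-1$, giving $F\sim(g(F)-1)(Q_1-Q_2)$. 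Having missed this, you instead assert that a nef class $F$ with $F\cdot C>0$ must lie in the interior of the nef cone and so have $F^2>0$. That assertion is false: the nef cone has two edges, and only one of them, $\RR_+[D]$ with $D=(a+1)Q_1+aQ_2$, is orthogonal to $C$ (indeed $C^{\perp}\cap\mathrm{Nef}(S)=\RR_+[D]$, since $C^{\perp}$ is the line spanned by $[D]$); the other edge pairs strictly positively with $C$. The actual fibre class is itself the counterexample: $F\sim(g-1)(Q_1-Q_2)$ satisfies $F^2=0$ and $F\cdot C=(g-1)(2a+1)>0$, so your claim, if true, would rule out precisely the conclusion you are trying to prove.

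The resulting conclusion ``$F\cdot C=0$, hence $[F]\in\RR_+[D]$'' is therefore wrong, and visibly inconsistent: $D^2=2a+1>0$ while $F^2=0$, so no fibre class can be proportional to $D$; moreover, as you yourself note, $D$ is proportional to neither $Q_1+Q_2$ nor $Q_1-Q_2$, so ``matching against the two options'' cannot legitimately exclude only the first one and accept the second --- run to its end, your argument would show that no fibration exists at all whenever $S$ has a negative curve, which is not the statement. The uniqueness paragraph, by contrast, is salvageable once the fibre class is correctly pinned down: any two fibrations would have fibre classes proportional to $Q_1-Q_2$, hence $F\cdot F'=0$, contradicting the paper's earlier lemma that two distinct fibrations satisfy $F\cdot F'=(g-1)(g'-1)>0$ (or, as you suggest, contradicting connectedness of fibres via Stein factorization). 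So the uniqueness assertion is fine in spirit, but the main assertion needs the one-line nefness computation above in place of your cone argument.
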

\proof
 To show the last assertion, we observe that $F = Q_1 + \e Q_2$ is nef, hence $F \cdot C \geq 0$.

This condition amounts to $ a - \e (a+1) \geq 0$, hence $\e=-1$.

\qed

\section{Characteristic $p$}

We begin with an easy remark: let $S$ be an even  fake quadric (over an algebraically closed field of characteristic $p >0$).

Then the quadrant $ \{ n_1 L_1 + n_2 L_2 | n_1, n_2 \geq  0\}$ is contained in the closure of the effective cone,
since $\sP : =  \{ n_1 L_1 + n_2 L_2 | n_1, n_2 >  0\}$ consists of big divisors $D$ (this means, for $ n \gg 0$ ,
$ nD = A + E$, where $A$ is ample and $E$ effective.

Indeed, by Riemann Roch $D \sim  d_1 L_1 + d_2 L_2$ is effective for $ d_1, d_2 \geq 2$, $(d_1, d_2) \neq (2,2)$.

\begin{lemma}\label{negative}
There are   at most two   negative irreducible curves $C$ on $S$.

 The class of  $C$ may only be $C\sim - L_1+b L_2$ or $C\sim a L_1- L_2$ and $C \cong \PP^1$.
Moreover, $ b \geq 2$, and $a \geq 2$ if $K_S$ is ample.

 If $K_S$ is not ample there  is a unique irreducible
$-2$-curve $C$  orthogonal to $K_S$: then either  $C\sim - L_1+ L_2$ or $C\sim  L_1- L_2$,
but obviously both possibilities cannot occur.

\end{lemma}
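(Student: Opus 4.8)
The plan is to work in the lattice $\mathrm{Num}(S) \cong \ZZ[L_1] \oplus \ZZ[L_2]$, writing any irreducible curve $C$ as $C \sim a L_1 + b L_2$ with $a, b \in \ZZ$, and using the intersection data $C^2 = 2ab$, $K_S \cdot C = 2(a+b)$ together with the adjunction inequality. First I would observe that if $C$ is a negative curve, then $C^2 = 2ab < 0$ forces $a$ and $b$ to have opposite signs; say $a > 0 > b$. The adjunction formula gives $2p_a(C) - 2 = C^2 + K_S \cdot C = 2(a + b + ab)$, and since $p_a(C) \geq 0$ we get $a + b + ab \geq -1$, i.e. $a(1+b) + b \geq -1$. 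Because $b \leq -1$ we have $1 + b \leq 0$, and a short case analysis should pin down $b = -1$ (forcing $p_a(C) \leq 0$, so $C \cong \PP^1$ with $C^2 = -2a$) or $a = -1$ by symmetry. Thus the class is $C \sim a L_1 - L_2$ or $C \sim -L_1 + b L_2$, as claimed, and $C$ is a smooth rational $(-2a)$- or $(-2b)$-curve.

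Next I would bound the number of such curves. The two possible sign patterns give two rays $\RR_+(a L_1 - L_2)$ and $\RR_+(-L_1 + b L_2)$ on opposite sides of the effective cone; I would argue that within each sign pattern there is at most one irreducible negative curve, by computing the intersection of two distinct such curves. If $C \sim a L_1 - L_2$ and $C' \sim a' L_1 - L_2$ with $a \neq a'$ were both irreducible, then $C \cdot C' = -(a + a') < 0$, which is impossible for two distinct irreducible curves (whose intersection number must be $\geq 0$); hence at most one curve in each of the two families, giving at most two negative curves in total. The sharper bounds $b \geq 2$ and $a \geq 2$ (when $K_S$ is ample) come from $K_S \cdot C = 2(a-1) > 0$ for $C \sim a L_1 - L_2$, forcing $a \geq 2$; when $K_S$ is merely nef and big (the rational case $S \cong \FF_{2n}$) the boundary case $K_S \cdot C = 0$ becomes available, producing exactly the $(-2)$-curve.

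For the final statement, when $K_S$ is not ample the surface is rational, $S \cong \FF_{2n}$, and there is a unique negative section with self-intersection $-2n$; but the lemma asserts a unique $(-2)$-curve \emph{orthogonal to $K_S$}, i.e. with $K_S \cdot C = 0$, so $2(a+b) = 0$, giving $b = -a$ and $C^2 = -2a^2$. Combined with $C \cong \PP^1$ and $C^2 = -2$ this forces $a = \pm 1$, so $C \sim L_1 - L_2$ or $C \sim -L_1 + L_2$; the two cannot coexist since their sum would be numerically trivial yet effective. I expect the main obstacle to be the bookkeeping in the adjunction case analysis: one must carefully rule out $|b| \geq 2$ (equivalently $a(1+b) + b < -1$) using both the sign constraint and integrality, and then transfer the uniqueness argument cleanly across the two sign patterns while keeping track of when equality $K_S \cdot C = 0$ is permitted. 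The delicate point is distinguishing the ample-$K_S$ regime (general type, genuine even fake quadric, where strict inequalities hold) from the nef-but-not-ample regime (rational $\FF_{2n}$, where the orthogonal $(-2)$-curve appears), since the two cases are governed by whether $K_S \cdot C$ can vanish.
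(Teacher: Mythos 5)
Your treatment of the first two assertions is correct and is essentially the paper's own argument: write $C\sim aL_1+bL_2$, use $C^2=2ab$, $K_S\cdot C=2(a+b)$ and adjunction to force $b=-1$ (or $a=-1$), hence $p_a(C)=0$ and $C\cong \PP^1$, and observe that two negative irreducible curves in the same quadrant would have negative intersection number, so there is at most one per quadrant. One cosmetic point: you phrase the uniqueness computation only for $a\neq a'$, but you must also exclude two \emph{distinct} irreducible curves in the same class $aL_1-L_2$; the identical computation $C\cdot C'=-(a+a')=-2a<0$ handles this. Your derivation of $a,b\geq 2$ from ampleness, via $K_S\cdot C=2(a-1)>0$, is also what the lemma intends.

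The genuine error is in your handling of the case where $K_S$ is not ample: you assert that then $S$ is rational, $S\cong \FF_{2n}$, and you try to produce the orthogonal $(-2)$-curve as the negative section of that surface. This is wrong on two counts. First, the lemma concerns an even \emph{fake} quadric, which is by definition of general type, so $S$ is never rational; the dichotomy ``rational $\FF_{2n}$ versus general type with $K_S$ ample'' is the characteristic-zero picture, and the entire point of this section of the paper is that it fails in characteristic $p>0$: Miyaoka's theorem (ampleness of $K_S$ and absence of rational curves, valid over $\CC$) is unavailable, so a \emph{minimal surface of general type} can here have $K_S$ nef and big but not ample. Second, even inside your framing the negative section $C_0$ of $\FF_{2n}$ has $C_0^2=-2n$ and $K\cdot C_0=2n-2$, so it is neither a $(-2)$-curve nor orthogonal to $K$ unless $n=1$. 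The correct argument for the last claim is: $S$ is minimal of general type, so $K_S$ is nef, and $K_S^2=8>0$; if $K_S$ is not ample, then by the Nakai--Moishezon criterion there is an irreducible curve $C$ with $K_S\cdot C=0$; the Hodge index theorem gives $C^2<0$, and adjunction $2p_a(C)-2=C^2+K_S\cdot C=C^2$ forces $C^2=-2$, $p_a(C)=0$, $C\cong\PP^1$. Only after this existence step do your lattice computations apply: $a+b=0$ and $2ab=-2$ give $C\sim\pm(L_1-L_2)$; both signs cannot occur simultaneously, since the sum of the two curves would be effective and numerically trivial; and uniqueness within the one admissible class follows from $(L_1-L_2)^2=-2<0$.
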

\begin{proof}
If $C$ is irreducible with $ C \sim  c_1 L_1 + c_2 L_2$, if $C$ is negative $ c_1 c_2 < 0$,
hence we may assume that $c_1 > 0, c_2  < 0$.

Since
$ K_S   C \geq 0$, we obtain  $ c_1 + c_2 \geq 0$.

If $C'$ is another negative irreducible curve , it cannot lie in the same quadrant, since
$c'_1 > 0, c'_2  < 0$ implies $C   C' = c_1 c'_2  + c'_1c_2 <0$, a contradiction.

Hence there is at most one negative curve, in each of the two quadrants which are neither positive nor negative.

 Assume now that we have an  irreducible curve $C$ with $C^2<0$. Then  $C\sim a L_1+b L_2$ for $a, b\in \ZZ$ and
$$C^2=2ab,~~ K_S   C=2a+2b.$$
We may assume that $a >0$ and $b<0$. By the adjunction formula,
$$-2\le 2p_a(C)-2=C^2+K_SC=2(a+b+ab)=  2a (1+b) + 2 b.$$
Therefore $b=-1$, $C \cong \PP^1$.

\end{proof}

\begin{remark}
i) In the paper \cite{EST} Ekedahl, Shepherd Barron and Taylor show that for each prime $p$
which is inert in the quadratic field $\sK$ which is the centre of the quaternion algebra $\sA$
of a Shavel like surface, then the divisors $ -2 L_1+2p   L_2$ and  $ 2p   L_1 -2 L_2$
are effective.

ii) is it true that the possible numbers $a,b$ in the previous Lemma \ref{negative} can only be equal to
the characteristic $p$?

\end{remark}

If we have two  negative curves $C_1 \sim a_1 L_1- L_2$, $C_2 \sim - L_1+ b_2 L_2$,
then these two curves span the Effective cone, which is therefore polyhedral.

The Nef cone consists of divisors $D  \sim a  L_1 + b  L_2$ such that
$$ a \leq b a_1, b \leq a b_2,$$
hence it is polyhedral and spanned by
$D_1   \sim a_1  L_1 +   L_2$, $D_2  \sim   L_1 + b_2  L_2$.

\begin{prop}\label{mds}
If on $S$ there are  two  negative curves $C_1 \sim a_1 L_1- L_2$, $C_2 \sim - L_1+ b_2 L_2$,
then $S$ is a Mori Dream Space.

\end{prop}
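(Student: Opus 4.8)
The plan is to show both halves of the Mori dream criterion from \cite{K3}: that $\mathrm{Eff}(S)$ is rational polyhedral and that $\mathrm{Nef}(S)=\mathrm{SAmp}(S)$. The first half is essentially free from the discussion immediately preceding the statement: since $C_1$ and $C_2$ are irreducible negative curves lying in the two ``mixed'' quadrants, every other irreducible curve pairs nonnegatively with both, so $[C_1]$ and $[C_2]$ bound the effective cone, giving $\mathrm{Eff}(S)=\RR_{\ge 0}[C_1]+\RR_{\ge 0}[C_2]$, which is rational polyhedral. Dually, the nef cone is spanned by $D_1\sim a_1L_1+L_2$ and $D_2\sim L_1+b_2L_2$, the two edges computed just above, each orthogonal to one of the $C_i$.

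The substance of the argument is therefore the semiampleness: I must show $\mathrm{Nef}(S)=\mathrm{SAmp}(S)$, equivalently that each extremal nef generator $D_1,D_2$ is semiample (interior nef classes are ample by the index theorem, cf.\ \ref{prop:nefdivisor}(2), and positive combinations of semiample divisors are semiample). By symmetry it suffices to treat $D_1$. Here I would invoke the Zariski-type criterion already used in the proof of \ref{lem:intersectionF1}(3): a nef and big divisor $D$ is semiample if and only if some large multiple $|mD|$ has no fixed part, which in turn reduces to controlling the restriction $\hol_{C_1}(D_1)$ to the single curve $C_1$ that $D_1$ is orthogonal to. Since $D_1^2=a_1>0$, $D_1$ is nef and big, and $D_1\cdot C_1=0$ while $D_1\cdot C_2>0$ and $D_1$ is strictly positive on every other irreducible curve.

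The key computation is that $\hol_{C_1}(D_1)$ is torsion. The curve $C_1\cong\PP^1$ by Lemma \ref{negative}, so $\mathrm{Pic}^0(C_1)=0$ and every degree-zero line bundle on $C_1$ is trivial; since $D_1\cdot C_1=0$, the restriction $\hol_{C_1}(D_1)$ has degree $0$ and is hence trivial, in particular torsion. Now the same Kawamata--Viehweg and Zariski argument as in \ref{lem:intersectionF1}(3) applies verbatim: for $m$ large and divisible one has $H^1(S,mD_1-C_1)=0$ because $mD_1-C_1\sim K_S+(\text{nef and big})$, so the trace of $|mD_1|$ on $C_1$ is complete and base-point-free; writing $|mD_1|=|M|+F$ forces $F\cdot C_1=0$ and $C_1\not\le F$, whence $M\cdot C_1=0$, so $M\sim\lambda D_1$ up to torsion, and finiteness of $\mathrm{Tors}(S)$ gives a fixed-part-free multiple. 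The symmetric statement holds for $D_2$ and $C_2$.

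The main obstacle I anticipate is precisely the reduction to the single-curve restriction and the vanishing $H^1(S,mD_1-C_1)=0$: I must verify that $mD_1-C_1$ is of the form $K_S$ plus a nef and big class so that Kawamata--Viehweg applies, and I should check this is valid in the positive-characteristic setting of the section, where Kawamata--Viehweg vanishing can fail in general. However, the rationality of $C_1$ removes the only genuinely delicate point (the torsion condition on $\hol_{C_1}(D_1)$ that was merely \emph{necessary} in \ref{prop:noratcurve}(3) here holds automatically), so the remaining work is bookkeeping with the intersection numbers $D_1^2=a_1$, $D_1\cdot C_1=0$ rather than any new idea. Should characteristic-$p$ vanishing be an issue, I would instead argue directly that $D_1$ defines the contraction of $C_1$, realizing $S$ as a finite cover of the resulting surface and deducing semiampleness from the ampleness of the pushforward.
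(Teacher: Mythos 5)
Your cone computations and the reduction to semiampleness of $D_1\sim a_1L_1+L_2$ and $D_2\sim L_1+b_2L_2$ match the paper's setup, and your observation that $\hol_{C_1}(D_1)$ is automatically trivial because $C_1\cong\PP^1$ (Lemma \ref{negative}) and $D_1\cdot C_1=0$ is correct and is also used by the paper. But there is a genuine gap in your central step: Proposition \ref{mds} lives in Section 7, where $S$ is a surface over an algebraically closed field of characteristic $p>0$ (this is the whole point -- it is the ingredient that makes reductions of Shavel surfaces modulo $p$ into Mori dream surfaces in Theorem \ref{shavel}). In that setting the Kawamata--Viehweg vanishing $H^1(S,mD_1-C_1)=0$, which carries all the weight of your argument, is simply not available: $mD_1-C_1-K_S$ is nef and big but not ample, and vanishing for nef and big twists is exactly what fails in positive characteristic (Raynaud-type examples). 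You flag this yourself, but flagging it does not repair it, and your fallback is both too vague and incorrect as stated: contracting $C_1$ does not realize $S$ as a \emph{finite} cover of anything -- the contraction is a birational morphism -- so ``ampleness of the pushforward'' does not come for free.

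The paper's proof supplies precisely the missing positive-characteristic tool: Keel's theorem (Theorem 0.2 of \cite{keel}, see also \cite{cmm}), which states that in characteristic $p$ a nef and big divisor $D$ is semiample if and only if its restriction to the exceptional locus $\mathbb{E}(D)$ is semiample. Since $C_1$ is the unique curve orthogonal to $D_1$, one has $\mathbb{E}(D_1)=C_1\cong\PP^1$, the restriction has degree zero, hence is trivial and semiample, and Keel's theorem concludes. Note that Keel's theorem is false over $\CC$, just as Kawamata--Viehweg fails in characteristic $p$: you have used the characteristic-zero tool in the characteristic-$p$ section. Your fallback \emph{could} be turned into a characteristic-free proof -- contract $C_1$ by Artin's contractibility criterion (the singularity is rational since $p_a(C_1)=0$, so the contraction is a normal projective surface), use finiteness of the local class group of a rational surface singularity to descend a multiple of $D_1$ to a Cartier divisor, and apply Nakai--Moishezon -- but that argument is not the one you wrote, and it requires these additional inputs to be named and justified.
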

\begin{proof}
By \cite{K3} it suffices to show that the two divisors $D_1   \sim a_1  L_1 +   L_2$, $D_2  \sim   L_1 + b_2  L_2$
are semiample.

The divisors are both nef and big, and
by symmetry, it suffices to show only the first assertion, that $D_1$ is semiample.

We denote by $\mathbb{E} (D_1)$ the exceptional locus of $D_1$, i.e. the union of the finite maximal subvarieties $Z$ such that the restriction of $D_1$
to $Z$ is non big. Since $D_1$ is big and $C_1$ is the only curve which is orthogonal to $D_1$, it follows that $\mathbb E (D_1)=C_1$.
By Lemma \ref{negative}, we have that $C_1 \cong \PP^1$ and hence $ \hol_{C_1}(D_1)$ is semiample.

We apply Theorem 0.2 of \cite{keel} (see also \cite{cmm} Corollary 3.6), stating that if we are
in positive characteristic and $D_1$ is nef and big and the restriction to the exceptional locus
$\mathbb{E} (D_1)$ is semiample, then also $D_1$ is semiample.

Hence we are done.

\end{proof}

Theorem \ref{shavel} follows now immediately from Theorem \ref{thm:shavel} and the previous Proposition
\ref{mds}.

\section{Problems}

\begin{itemize}
\item
Problem~1:~ Assume that $S$ is isogenous to a product of curves, of unmixed type: is $S$ an even  fake quadric?

        Determine more generally when  a fake quadric $S$, isogenous to a product,  is an odd  fake quadric.
    \item Problem~2:~Let $S$ be an odd  fake quadric. Does  $S$ contain a negative curve?
    \item Problem~3:~Let $S$ be an odd  fake quadric. Could $S$ have two fibrations?

          Remark:~ this is related to Problem 1 since   surfaces isogenous to a product are $\QQ$-homology quadrics
           having two fibrations.

    \item Problem~4'~(Hirzebruch' s  question):
Is every surface homeomorphic to a smooth quadric indeed a deformation of $\PP^1 \times \PP^1$?

   \item Problem~4'':
Is every surface homeomorphic to $\FF_1$ indeed a deformation of $\FF_1$?

A negative answer to both questions would follow if one could answer positively the next question 5, or  negatively the
weaker question 6: indeed, by Michael Freedman's Theorem \cite{freedman} a simply connected fake quadric is homeomorphic
either to  $\FF_1$ or to $\FF_0 = \PP^1 \times \PP^1$. 
\item
 Problem~5:~Let $S$ be a fake quadric: is then the universal covering of $S$ biholomorphic to $\HH \times\HH$?
 \item
 Problem~6:~Is there a simply connected   fake quadric?
 \item
 Problem~7: (raised by Michael L\"onne): is  there a fake quadric with $H_1(S, \ZZ) =0$?

\end{itemize}

\begin{remark}
If a fake quadric $S$  is homeomorphic to $\PP^1 \times \PP^1$ then $S$ is spin, that is, $K_S$ is divisible by $2$,
and one may study its half-canonical ring.
\end{remark}

\end{document}